
\documentclass[10pt,draft]{amsart}

\usepackage[utf8]{inputenc}
\usepackage[english]{babel}
\usepackage{amsmath,amssymb,amsthm,amsfonts,graphicx,amscd}
\usepackage{xcolor}
\usepackage{tikz}
 
\usetikzlibrary{arrows}
\usetikzlibrary{decorations.pathreplacing,calc,shapes.geometric,patterns}

\newtheorem{theo}{Theorem}[section]
\newtheorem{lem}[theo]{Lemma}
\newtheorem{pro}[theo]{Proposition}
\newtheorem{exa}[theo]{Example}
\newtheorem{cor}[theo]{Corollary}
\newtheorem{fact}[theo]{Fact}

\theoremstyle{definition}
\newtheorem{defi}[theo]{Definition}

\numberwithin{equation}{section}

\newcommand{\impli}{\Rightarrow}
\newcommand{\Nat}{\mathbb{N}}
\newcommand{\sub}{\subseteq}

\def\({\left(}
\def\){\right)}

\def\N{\mathbb{ N}}
\def\R{\mathbb{ R}}

\def\rto{\rightarrow}

\def\CC{\mathcal{C}}

\def\FF{\mathcal{F}}
\def\DD{\mathcal{D}}

\def\EE{\mathcal{E}}

\def\RR{\mathcal{R}}

\def\SS{\mathcal{S}}

\def\PP{\mathcal{P}}

\def\1{\textbf{1}}

\def\supp{\operatorname{supp}}
\def\co{\operatorname{co}}

\def\spn{\operatorname{span}}

\def\w{\omega}

\title{Weak$^*$-sequential properties of Johnson-Lindenstrauss spaces}

\author[A. Avil\'es]{Antonio Avil\'es}
\address{Dpto. de Matem\'{a}ticas, Facultad de Matem\'{a}ticas, Universidad de Murcia, 30100 Espinardo (Murcia), Spain}
\email{avileslo@um.es}

\author[G.\ Mart\'{\i}nez-Cervantes]{Gonzalo Mart\'{\i}nez-Cervantes}
\address{Dpto. de Matem\'{a}ticas, Facultad de Matem\'{a}ticas, Universidad de Murcia, 30100 Espinardo (Murcia), Spain}
\email{gonzalomartinezcervantes@gmail.com}

\author[J. Rodr\'{i}guez]{Jos\'e Rodr\'{i}guez}
\address{Dpto. de Ingenier\'{i}a y Tecnolog\'{i}a de Computadores,
Facultad de Inform\'{a}tica, Universidad de Murcia, 30100 Espinardo (Murcia), Spain}
\email{joserr@um.es}

\subjclass[2010]{46A50, 46B26}

\keywords{Johnson-Lindenstrauss space; weak$^*$-sequential closure; almost disjoint family}

\date{\today}

\thanks{Research supported by projects MTM2014-54182-P, MTM2017-86182-P (AEI/FEDER, UE)
and 19275/PI/14 (Fundaci\'on S\'eneca).}

\begin{document}

\begin{abstract}
A Banach space $X$ is said to have Efremov's property ($\mathcal{E}$) if every element of the weak$^*$-closure of a convex bounded set $C \subseteq X^*$ 
is the weak$^*$-limit of a sequence in $C$. By assuming the Continuum Hypothesis, we prove that there exist maximal almost disjoint families
of infinite subsets of $\mathbb{N}$ for which the corresponding Johnson-Lindenstrauss spaces enjoy (resp. fail) property ($\EE$). 
This is related to a gap in [A. Plichko, \emph{Three sequential properties of dual Banach spaces 
in the weak$^*$ topology}, Topology Appl. 190 (2015), 93--98] and allows to answer (consistently) questions of Plichko and Yost.
\end{abstract}

\maketitle

\section{Introduction}

A Banach space $X$ is said to have 
\begin{enumerate}
\item[(i)] {\em weak$^*$-angelic dual} if every element of the weak$^*$-closure of a bounded set $B \subseteq X^*$ 
is the weak$^*$-limit of a sequence in~$B$;
\item[(ii)] {\em Efremov's property~($\EE$)} if every element of the weak$^*$-closure of a convex bounded set $C \subseteq X^*$ 
is the weak$^*$-limit of a sequence in~$C$;
\item[(iii)] {\em Corson's property~($\CC$)} if every element of the weak$^*$-closure of a convex bounded set $C \subseteq X^*$ 
belongs to the weak$^*$-closure of a countable subset of~$C$.
\end{enumerate}
Clearly, (i)$\impli$(ii)$\impli$(iii).
Property~($\EE$) was first considered in~\cite{efr} and was studied further 
by Plichko and Yost in~\cite{pli3,pli-yos-2}. To clarify whether 
property~($\EE$) is actually different to the weak$^*$-angelicity of the dual or property~($\CC$),
they asked in~\cite[p.~352]{pli-yos-2} if  Johnson-Lindenstrauss spaces enjoy property~($\EE$).
It is well known that the Johnson-Lindenstrauss space $JL_2(\FF)$ associated to any almost disjoint family~$\FF$
of subsets of~$\mathbb{N}$ has property~($\CC$), but fails to have weak$^*$-angelic dual whenever
$\FF$ is maximal (shortly, a MAD family). Plichko~\cite{pli3} claimed that Johnson-Lindenstrauss spaces have property~($\EE$). However, 
his proof contains a gap. Under the Continuum Hypothesis (CH), we will prove the existence
of two MAD families $\FF_+$ and $\FF_-$ such that $JL_2(\FF_+)$ has property~($\EE$), while $JL_2(\FF_-)$ fails it.
We do not know whether such MAD families can be constructed in 
ZFC without any extra set-theoretic assumption. 

In particular, under CH, property~($\EE$) lies strictly between having weak$^*$-angelic dual
and property~($\CC$). We stress that other consistent examples of Banach spaces with property~($\CC$)
but not property~($\EE$) were already constructed by J.T.~Moore under~$\Diamond$ (unpublished) and 
Brech, see \cite[Section~3.3]{brech}. 

A Banach space $X$ is said to have {\em Gulisashvili's property~($\DD$)} if $\sigma(X^*)=\sigma(\Gamma)$
for any total set $\Gamma \sub X^*$, where $\sigma(\Gamma)$ denotes the $\sigma$-algebra on~$X$ generated by~$\Gamma$.
Gulisashvili~\cite{gul-J} proved that this property is enjoyed by any Banach space having weak$^*$-angelic dual
and asked whether the converse holds. One of the aims in~\cite{pli3} was to show that Johnson-Lindenstrauss spaces
separate both properties. This is indeed the case but the argument cannot rely on property~($\EE$).
To explain this we need a couple of definitions. A Banach space $X$ is said to have
\begin{itemize}
\item {\em property ($\EE'$)} if every weak$^*$-sequentially closed 
convex bounded subset of~$X^*$ is weak$^*$-closed (see \cite{gon3});
\item {\em property ($\DD'$)} if every weak$^*$-sequentially closed linear subspace of~$X^*$ is weak$^*$-closed (see \cite{pli3}).
\end{itemize}
The following diagram summarizes the relations between all these properties: 

\vspace{0.1cm}
\begin{center}
\begin{tikzpicture}[thick]
\node[rectangle,rounded corners,draw=black] (nodo1) {weak$^*$-angelic dual};
\node[rectangle,rounded corners,draw=black] (nodo2) at (3,0) {($\mathcal{E}$)};
\node[rectangle,rounded corners,draw=black] (nodo3) at (5,0) {($\mathcal{E}'$)};
\node[rectangle,rounded corners,draw=black] (nodo4) at (7,0) {($\mathcal{D}'$)};
\node[rectangle,rounded corners,draw=black] (nodo5) at (9,0) {($\mathcal{D}$)};
\node[rectangle,rounded corners,draw=black] (nodo6) at (5,-2) {($\mathcal{C}$)};
\draw[-implies,double equal sign distance] (nodo1) -- (nodo2);
\draw[-implies,double equal sign distance] (nodo2) -- (nodo3);
\draw[-implies,double equal sign distance] (nodo3) -- (nodo4);
\draw[-implies,double equal sign distance] (nodo4) -- (nodo5);
\draw[-implies,double equal sign distance] (nodo3) -- (nodo6);
\end{tikzpicture}
\end{center}

\noindent The second-named author proved in~\cite{gon3} that any Johnson-Lindenstrauss space has weak$^*$-sequential dual ball and so it has property~($\EE'$).
In particular, this implies that the Johnson-Lindenstrauss space associated to any MAD family
works as a counterexample to Gulisashvili's question above.
Note also that, under CH, the space $JL_2(\FF_-)$ based on our MAD family~$\FF_-$ answers 
in the negative Plichko's question~\cite{pli3} of whether properties ($\EE$) and ($\DD'$)
are equivalent.

This paper is organized as follows. 
In Section~\ref{section:Preliminaries} we fix the terminology and collect some preliminary
facts on Johnson-Lindenstrauss spaces.
In Sections~\ref{section:JLwithE} and~\ref{section:JLwithoutE}, by assuming CH, we construct MAD families such that the corresponding 
Johnson-Lindentrauss spaces have/fail property ($\EE$). 
Finally, in Section~\ref{section:P} we analyze Plichko's attempt to prove that all Johnson-Lindentrauss spaces have property~($\EE$). 
For instance, we show that if $X$ is a Banach space which is weak$^*$-sequentially dense in~$X^{**}$, then 
$X$ has property~($\EE'$) (see Theorem~\ref{theo:seq-dense-bidual}).

\section{Preliminaries}\label{section:Preliminaries}

All our Banach spaces are real. The (topological) dual of a Banach space~$X$ is denoted by $X^*$ and the weak$^*$-topology
on~$X^*$ is denoted by~$w^*$. The linear span of a set $W \sub X$ is denoted by $\spn(W)$, while
$\overline{\spn}(W)$ stands for its closure; we write
${\rm co}(W)$ for the convex hull of~$W$. The closed unit ball of~$X$ is denoted by~$B_X$.

Two sets are said to be {\em almost disjoint} if they have finite intersection. 
By an {\em almost disjoint family} we mean a family of pairwise almost disjoint infinite subsets of~$\N$. 
An almost disjoint family is said to be a {\em maximal almost disjoint (MAD)} 
family if it is maximal with respect to inclusion.

Let $\FF$ be an almost disjoint family. The Johnson-Lindenstrauss space $JL_2(\FF)$ is defined 
as the completion of $\spn( c_0 \cup \lbrace \chi_{N}: N \in \FF \rbrace ) \sub \ell_\infty $ with respect to the norm
$$ 
	\Bigl\|x+  \sum_{r=1}^k a_r \chi_{N_r}\Big\|_{JL_2(\FF)}:= 
	\max \bigg\lbrace\Big\|x+  \sum_{r=1}^k a_r \chi_{N_{r}} \Big\|_\infty, 
	\, \bigg({\sum_{r=1}^k a_r^2}\bigg)^{\frac{1}{2}} \bigg 	\rbrace ,
	$$
where $x \in c_0$, $\{N_1,\dots,N_k\} \sub \FF$ 
and $a_1,\dots,a_k\in \R$. Here $\chi_{N}$ denotes the characteristic function of a set $N \sub \N$ and $\| \cdot \|_\infty$ is the supremum norm 
on~$\ell_\infty$. Johnson-Lindenstrauss spaces first appeared in~\cite{joh-lin} and, in general, they refer to spaces of the form 
$JL_2(\FF)$ with $\FF$ being a MAD family. However, we will avoid the maximality assumption on~$\FF$ unless otherwise mentioned.

The dual  $JL_2(\FF)^*$ is isomorphic to $\ell_1 \oplus \ell_2(\FF)$. More precisely, for each $n\in \N$, let
$e_n^*\in JL_2(\FF)^*$ be the functional satisfying 
$e_n^*(\chi_N)=\chi_N(n)$ for all $N\in \FF$ and
$$
	e_n^*(e_j)=
	\begin{cases}
	1 & \text{if $n=j$}\\
	0 & \text{otherwise}
	\end{cases}
$$
for all $j\in \N$, where $\(e_j\)_{j\in \N}$ denotes the usual basis of~$c_0$. 
For each $N \in \FF$, let $e_N^*\in JL_2(\FF)^*$ be the functional satisfying $e_N^*(x)=0$ for every $x \in c_0$ and 
$$
	e_N^*(\chi_{N'})=
	\begin{cases}
	1 & \text{if $N' = N$}\\
	0 & \text{otherwise}
	\end{cases}
$$
for all $N'\in \FF$. Then $\(e_n^*\)_{n\in \N}$ is equivalent to the usual basis of~$\ell_1$,
$\(e_N^*\)_{N\in \FF}$ is equivalent to the usual basis of~$\ell_2(\FF)$ and 
$JL_2(\FF)^*$ equals to the direct sum of $\overline{\spn}(\{e_n^*:n\in \N\})$
and $\overline{\spn}(\{e_N^*: N\in \FF\})$. That is, every $x^*\in JL_2(\FF)^*$ has a unique expression of the form 
$$
	x^*=\sum_{n \in \N} a_n e_n^* + \sum_{N \in \FF} a_N e_N^*
$$ 
with $\(a_n\)_{n\in \N} \in \ell_1$ and $\(a_N\)_{N\in \FF} \in \ell_2(\FF)$, and we write
$$
	\supp_\N x^* := \{ n \in \N: a_n \neq 0 \}
	\quad\mbox{and}\quad
	\supp_\FF x^* := \{N \in \FF: a_N\neq 0 \}.
$$  
We say that $x^*$ is \textit{finitely supported} if $\supp_\N x^*$ and $\supp_\FF x^*$ are both finite.

So, if $\FF \sub \FF'$ are two almost disjoint families, then there is an isomorphic embedding $i: JL_2(\FF)^* \rto JL_2(\FF')^*$ 
such that $i(e_n^*)=e_n^*$ for all $n\in \N$ and $i(e_N^*)=e_N^*$ for all $N\in \FF$ 
(note that $i$ is not weak$^*$-weak$^*$ continuous). This allows us 
to see every element of~$JL_2(\FF)^*$ as an element of~$JL_2(\FF')^*$ through the operator~$i$ (which will be omitted)
and we write $JL_2(\FF)^*\sub JL_2(\FF')^*$.

For more information on Johnson-Lindentrauss spaces we refer the reader to \cite{mar-pol-1}, \cite{mar-pol-3}, \cite{yos1}, \cite{yos2}
and~\cite{ziz}.

\section{A Johnson-Lindenstrauss space with property ($\EE$)}\label{section:JLwithE}

In this section we will prove that, under CH, there exists a MAD family $\FF_+$ for which $JL_2(\FF_+)$ has property~($\EE$). 
In order to do this, we construct by transfinite induction an increasing family of countable almost disjoint families $\(\FF_\alpha\)_{\alpha<\w_1}$ such that 
$\bigcup_{\alpha < \w_1} \FF_\alpha$ is a MAD family and such that every bounded sequence in $JL_2(\bigcup_{\alpha < \w_1} \FF_\alpha)^*$ 
containing~$0$ in its weak$^*$-closure is dealt with at some step to guarantee that it admits a subsequence 
whose arithmetic means are weak$^*$-null.

\begin{defi}
Given an almost disjoint family~$\FF$, we say that a 
sequence $\(x_{n}^*\)_{n \in \N}$ in $JL_2(\FF)^*$ is \textit{semi-summable} if $\sup_{j\in\N}\sum_{n \in \N} |x_n^*(e_j)| < \infty$.
\end{defi}

\begin{lem}\label{lem:summable}
Let $\FF$ be an almost disjoint family and $\(x_{n}^*\)_{n \in \N}$ a bounded
sequence in~$JL_2(\FF)^*$ for which $0$ is a weak$^*$-cluster point.
Then $\(x_{n}^*\)_{n \in \N}$ admits a semi-summable subsequence.  
\end{lem}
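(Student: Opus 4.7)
The plan is to reduce to the $\ell_1$-part of $JL_2(\FF)^*$ and then perform a standard gliding hump argument. Write each $x_n^* = u_n^* + w_n^*$, where
$u_n^* = \sum_{j\in\N} a_j^{(n)} e_j^*$ lies in $\overline{\spn}\{e_j^* : j \in \N\}$ (the $\ell_1$-part) and $w_n^*$ in the $\ell_2(\FF)$-part, and note that $x_n^*(e_j) = a_j^{(n)}$. Thus semi-summability only involves $(u_n^*)$, which is a bounded sequence in $\ell_1$ with $\sup_n \|u_n^*\|_1 =: M < \infty$ by equivalence of norms on $JL_2(\FF)^*$.

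Next, I would extract an initial subsequence along which $a_j^{(n)} \to 0$ for every $j \in \N$. This is where the hypothesis ``$0$ is a weak$^*$-cluster point'' is used: restricting the test functionals to the separable subspace $c_0 \sub JL_2(\FF)$ makes the weak$^*$-topology on the bounded set $\{x_n^* : n \in \N\}$ pseudometrizable, so some subsequence $(x_{n_k}^*)$ actually converges to $0$ on all of $c_0$, i.e., $a_j^{(n_k)} \to 0$ for each $j$.

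Now I perform a gliding hump. Fix $\varepsilon > 0$. Inductively choose $0 = J_0 < J_1 < J_2 < \cdots$ in $\N$ and a further subsequence $k_1 < k_2 < \cdots$ so that
\[
\sum_{j \leq J_{\ell-1}} |a_j^{(n_{k_\ell})}| < \varepsilon 2^{-\ell}
\quad\text{and}\quad
\sum_{j > J_\ell} |a_j^{(n_{k_\ell})}| < \varepsilon 2^{-\ell}.
\]
The first inequality can be arranged because $a_j^{(n_k)} \to 0$ pointwise on the \emph{finite} set $\{1,\dots,J_{\ell-1}\}$; the second because $u_{n_{k_\ell}}^* \in \ell_1$ and we may choose $J_\ell > J_{\ell-1}$ as large as we need.

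For each fixed $j$, let $\ell_0$ be the unique index with $J_{\ell_0-1} < j \leq J_{\ell_0}$. For $\ell > \ell_0$ one has $j \leq J_{\ell-1}$, so $|a_j^{(n_{k_\ell})}| < \varepsilon 2^{-\ell}$; for $\ell < \ell_0$ one has $j > J_\ell$, so again $|a_j^{(n_{k_\ell})}| < \varepsilon 2^{-\ell}$; and for $\ell = \ell_0$ the trivial bound $|a_j^{(n_{k_{\ell_0}})}| \leq M$ applies. Summing gives $\sum_\ell |a_j^{(n_{k_\ell})}| \leq 2\varepsilon + M$, uniformly in $j$, which is exactly semi-summability of $(x_{n_{k_\ell}}^*)$. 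The only mild obstacle is the metrizability step that lets us pass from ``cluster point'' to ``sequentially convergent subsequence'' on $c_0$; once that is in hand, the gliding hump is mechanical.
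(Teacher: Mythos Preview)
Your proof is correct. The metrizability step is sound: the topology $\sigma(X^*,c_0)$ on bounded sets is generated by the countable family $\{e_j:j\in\N\}$, hence pseudometrizable, so a weak$^*$-cluster point of the sequence is also a $\sigma(X^*,c_0)$-cluster point and one can extract a subsequence with $a_j^{(n_k)}\to 0$ for every~$j$. The gliding hump that follows is standard and your bookkeeping is right (in fact you get the bound $M+\varepsilon$, even better than the $M+2\varepsilon$ you claim).

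The paper takes a slightly different route that avoids the preliminary metrizability extraction and the $\ell_1$ gliding hump. Instead it builds the subsequence directly, maintaining the invariant $\sum_{i\le k}|x^*_{n_i}(e_j)|<2$ for all~$j$ at every stage. The key observation is that for each $x^*$ and $c>0$ the set $\{j:|x^*(e_j)|\ge c\}$ has at most $\|x^*\|c^{-1}$ elements; hence at step~$k$ the ``dangerous'' set $S=\{j:\sum_{i\le k}|x^*_{n_i}(e_j)|\ge 1\}$ is finite, and the cluster point hypothesis is invoked \emph{at that step} to pick $n_{k+1}$ with small values on~$S$. Outside~$S$ the invariant holds automatically. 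So the paper uses the cluster point hypothesis repeatedly inside the induction, whereas you use it once (via metrizability) to get coordinatewise convergence and then run a classical $\ell_1$ argument. Your approach makes the reduction to~$\ell_1$ explicit and is perhaps more familiar; the paper's is a bit shorter and self-contained.
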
 
\begin{proof}
Observe first that for every $x^*\in JL_2(\FF)^*$ and $c>0$ the cardinality of the set
$\{j\in \N: |x^*(e_j)|\geq c\}$ is less than or equal to $\|x^*\|c^{-1}$.
 
Assume without loss of generality
that $\|x_{n}^*\|\leq 1$ for all $n\in \N$. We next construct by induction a subsequence $\(x^*_{n_k}\)_{k\in \N}$
such that $\sum_{i=1}^k|x^*_{n_i}(e_j)|<2$ for every $k\in \N$ and $j\in \N$. Of course,
such subsequence is semi-summable.
For the first step, 
just take $x^*_{n_1}:=x^*_1$. Now, suppose $n_1<n_2<\dots<n_k$ have been already chosen
in such a way that $\sum_{i=1}^k|x^*_{n_i}(e_j)|<2$ for every $j\in \N$. Note that
$$
	S:=\Big\{j\in \N: \, \sum_{i=1}^k|x^*_{n_i}(e_j)|\geq 1\Big\} \sub
	\bigcup_{i=1}^k \Big\{j\in \N: \, |x^*_{n_i}(e_j)|\geq \frac{1}{k}\Big\}, 
$$
so $S$ is finite. Since $0$ is a weak$^*$-cluster point of $\(x_{n}^*\)_{n \in \N}$,
there exists $n_{k+1}>n_k$ such that
$$
	\sum_{i=1}^{k+1}|x^*_{n_i}(e_j)|<2
	\quad \mbox{for every }j\in S.
$$
On the other hand, given any $j\in \N\setminus S$, we have
$$
	\sum_{i=1}^{k+1}|x^*_{n_i}(e_j)|
	<1+|x^*_{n_{k+1}}(e_j)| \leq 2,
$$
because $\|x^*_{n_{k+1}}\|\leq 1$. Therefore, $\sum_{i=1}^{k+1}|x^*_{n_i}(e_j)|<2$
for every $j\in \N$. 
\end{proof}

The following result will be our key lemma in the inductive construction. 

\begin{lem}
	\label{KeyLemmaEfremov}
	Let $\FF$ be a countable almost disjoint family with $\N=\bigcup \FF$ and let $\SS$ be a countable family of semi-summable sequences 
	of finitely supported elements of~$JL_2(\FF)^*$. 
	Let $\(x_k^*\)_{k \in \N}$ be a semi-summable sequence of finitely supported elements of~$JL_2(\FF)^*$ such that
	\begin{enumerate}
	\item[$(\star)$] $\bigcup_{k\in \N}\supp_\N x_k^*$ is not contained in a finite union of elements of~$\FF$. 
	\end{enumerate}
	Then there exists an infinite set $N \sub \bigcup_{k\in \N}\supp_\N x_k^*$ such that:
	\begin{enumerate}
		\item[(i)] $\FF \cup \{N\}$ is an almost disjoint family;
		\item[(ii)] $\lim_{k\to \infty}\frac{1}{k}(x_{1}^*+x_{2}^*+\ldots+x_{k}^*)(\chi_N)=0$;
		\item[(iii)] $\lim_{k\to \infty} \frac{1}{k}(y_{1}^*+y_{2}^*+\ldots+y_{k}^*)(\chi_N)=0$ 
		for every sequence $\(y_k^*\)_{k \in \N} \in \SS$.
	\end{enumerate}
	(We consider $\chi_N \in JL_2(\FF\cup\{N\})$ and the embedding $JL_2(\FF)^* \sub JL_2(\FF\cup\{N\})^*$.)  
\end{lem}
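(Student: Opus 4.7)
The plan is to build $N = \{n_1 < n_2 < \cdots\}$ by a standard diagonal induction over countably many constraints, with a sparsity condition that forces the Cesaro means to vanish. To set the stage, let $A := \bigcup_{k} \supp_\N x_k^*$, enumerate $\FF = \{M_i : i \in \N\}$ and enumerate the countable family $\SS \cup \{(x_k^*)_k\}$ as $\{(y_k^{(s)})_k : s \in \N\}$, where each $y_k^{(s)} \in JL_2(\FF)^*$ is finitely supported. For each $s$, semi-summability gives a constant $C_s$ with $\sum_k |y_k^{(s)}(e_n)| \leq C_s$ for every $n \in \N$, and finite support makes $L_k^{(s)} := \max(\{0\} \cup \bigcup_{i \leq k} \supp_\N y_i^{(s)})$ finite for each $k$. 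Fix a superlinearly growing sequence, say $K_l := 2^l$.

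At stage $l$, with $n_1 < \cdots < n_{l-1}$ already chosen, pick
\[
  n_l \in A \setminus (M_1 \cup \cdots \cup M_l)
  \quad\text{with}\quad
  n_l > \max\bigl(\{n_{l-1}\} \cup \{L_{K_l}^{(s)} : s \leq l\}\bigr).
\]
This is possible since, by $(\star)$, $A \setminus (M_1 \cup \cdots \cup M_l)$ is infinite. With $N := \{n_l : l \geq 1\}$, condition (i) is automatic: $n_l \notin M_i$ whenever $l \geq i$, so $N \cap M_i \subseteq \{n_1, \ldots, n_{i-1}\}$ is finite.

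For (ii) and (iii), fix any $s$ and use the identity $x^*(\chi_N) = \sum_{n \in N} x^*(e_n)$ (valid for every $x^* \in JL_2(\FF)^*$ embedded in $JL_2(\FF \cup \{N\})^*$, since the $\FF$-part contributes nothing at $\chi_N$) to write
\[
  \sum_{k=1}^K y_k^{(s)}(\chi_N) = \sum_l B_l^{(s),K},
  \qquad
  B_l^{(s),K} := \sum_{k=1}^K y_k^{(s)}(e_{n_l}).
\]
The sparsity condition forces $B_l^{(s),K} = 0$ whenever $l \geq s$ and $K \leq K_l$. Hence only the $l < s$ (at most $s$ of them) and the $l \geq s$ with $K_l < K$ (at most $\lfloor \log_2 K \rfloor$ of them) can contribute. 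Since each $|B_l^{(s),K}| \leq C_s$ by semi-summability, this gives $|\sum_{k=1}^K y_k^{(s)}(\chi_N)| \leq C_s(s + \log_2 K)$, and division by $K$ sends the Cesaro means to $0$.

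The hard part is finding the right notion of \emph{sparse}. The column totals $\mu_n^{(s)} := \sum_k |y_k^{(s)}(e_n)|$ need not decay in $n$ (as $y_k^{(s)} = e_k^*$ already shows), so one cannot hope to force $\sum_{n \in N}\mu_n^{(s)} < \infty$ and thereby bound $\sum_k y_k^{(s)}(\chi_N)$ outright. The fix is to settle for $o(K)$ partial-sum growth: postponing each new $n_l$ beyond $L_{K_l}^{(s)}$ with superlinear $K_l$ keeps only $O(\log K)$ of the $n_l$'s active at stage $K$, which is more than enough for Cesaro-nullity.
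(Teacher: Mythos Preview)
Your proof is correct and follows essentially the same approach as the paper: both pick the $l$-th element of $N$ inside $A$ but outside the first $l$ members of $\FF$ and outside the $\N$-supports of the first $2^l$ terms of the first $l$ sequences, then use semi-summability to bound each surviving column by a constant and count that only $O(\log K)$ columns survive at stage~$K$. The only cosmetic differences are that you fold $(x_k^*)_k$ into the family $\SS$ and phrase avoidance as ``$n_l$ larger than the maximum of the supports'' rather than ``$t_k$ not in the supports''.
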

\begin{proof}
	Enumerate $\FF= \{N_i: i \in \N\}$ and $\SS = \lbrace \(y_{r,m}^*\)_{m\in \N} : r \in \N \rbrace$. Define $A:=\bigcup_{k\in \N}\supp_\N x_k^*$. 
	For each $k\in \N$, set
	$$
		A_k:=\bigcup_{r,m \leq 2^k} \supp_\N y_{r,m}^* \cup \bigcup_{m \leq 2^k} \supp_\N x_{m}^* \cup \bigcup_{i \leq k} N_i,
	$$
	so that $A \setminus A_k$ is infinite (bear in mind $(\star)$ and the fact that $\N=\bigcup \FF$). 
	Therefore, we can take a sequence $\(t_k\)_{k\in \N}$ with $t_k \in A \setminus A_k$ for all $k\in \N$ and $t_k\neq t_{k'}$ whenever $k\neq k'$.
	Define $N := \{t_k: k \in \N \} \sub A$. Let us check that $N$ satisfies the required properties. 
		Given any $i\in \N$, we have $N_i \sub A_k$ for every $k\geq i$, hence $t_k\not\in N_i$ for every $k\geq i$, and so
	$N\cap N_i$ is finite. Therefore, $\FF\cup\{N\}$ is almost disjoint.
	Note that for any
	$$
		x^*=\sum_{n \in \N} a_n e_n^* + \sum_{M \in \FF} a_{M} e_{M}^* \in JL_2(\FF)^* \sub JL_2(\FF\cup\{N\})^*
	$$ 
	(with $\(a_n\)_{n\in \N} \in \ell_1$ and $\(a_{M}\)_{M\in \FF} \in \ell_2(\FF)$) we have 
	\begin{equation}\label{eqn:J}
		x^*(\chi_N)=\sum_{n\in N}a_n.
	\end{equation}
	
	In order to prove~(ii), fix $j \in \N$ with $j\geq 2$ and let $s(j)\in\N$ so that 
	$$
		s(j)\leq \log_2(j) < s(j)+1.
	$$ 
	Then
	$$
		N\cap \bigcup_{m \leq j} \supp_\N x_{m}^* \sub \{t_1,t_2,\dots,t_{s(j)}\}
	$$
	and so
	\begin{multline*}
		\Big|\frac{1}{j}(x_{1}^*+x_{2}^*+\ldots+x_{j}^*)(\chi_N)\Big|\stackrel{\eqref{eqn:J}}{=}
		\frac{1}{j}\Big|(x_{1}^*+x_{2}^*+\ldots+x_{j}^*)\bigg(\sum_{k=1}^{s(j)}e_{t_k}\bigg)\Big|
		\\
		\leq \frac{1}{j} \sum_{k=1}^{s(j)}\sum_{m=1}^j |x_m^*(e_{t_k})| 
		\leq \frac{s(j)}{j} C, 
	\end{multline*}
	where we write $C:=\sup_{k\in \N}\sum_{m=1}^\infty|x_m^*(e_{t_k})|<\infty$
	(bear in mind that  $\(x_m^*\)_{m \in \N}$ is semi-summable).
	It follows that $\lim_{j\to\infty}\frac{1}{j}(x_{1}^*+x_{2}^*+\ldots+x_{j}^*)(\chi_N)=0$.
	
	For the proof of~(iii), take any $r\in \N$. For every $j\in \N$ with $j\geq \max\{r,2\}$ we have
	$$
		N\cap \bigcup_{m \leq j} \supp_\N y_{r,m}^* \sub \{t_1,t_2,\dots,t_{s(j)}\}
	$$
	and, similarly as before, we conclude that 
	$\lim_{j\to\infty}\frac{1}{j}(y_{r,1}^*+y_{r,2}^*+\ldots+y_{r,j}^*)(\chi_N)=0$.	
	The proof is complete. 	
\end{proof}

\begin{theo}
	\label{theoMADfamilywithE}
	Under CH, there exists a MAD family $\FF_+$ such that every bounded sequence $\(x_n^*\)_{n\in \N}$ in $JL_2(\FF_+)^*$ 
	for which $0$ is a weak$^*$-cluster point has a subsequence $\(x^*_{n_k}\)_{k \in \N}$ such that 
	$$
		w^*-\lim_{k\to \infty}\frac{1}{k}(x_{n_1}^*+x_{n_2}^* + \ldots + x_{n_k}^*)=0.
	$$ 
\end{theo}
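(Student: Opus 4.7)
The plan is to construct, by transfinite induction under CH, an increasing tower $(\FF_\alpha)_{\alpha<\omega_1}$ of countable almost disjoint families with $\N=\bigcup\FF_\alpha$ at every stage, and take $\FF_+:=\bigcup_{\alpha<\omega_1}\FF_\alpha$. Using CH, fix enumerations $(M_\alpha)_{\alpha<\omega_1}$ of all infinite subsets of~$\N$ (to force maximality) and $(\tau_\alpha)_{\alpha<\omega_1}$ of all bounded ``formal'' sequences of finitely supported dual elements, i.e., finite $\R$-combinations of basic functionals $e_n^*$ ($n\in\N$) and $e_N^*$ ($N\in[\N]^\omega$), arranged by standard bookkeeping so that every such formal sequence is visited cofinally often.

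At successor stage $\alpha+1$, proceed as follows. If $\tau_\alpha$ is a legitimate sequence in $JL_2(\FF_\alpha)^*$ (every $e_N^*$-atom lying in~$\FF_\alpha$) and $0$ is a weak$^*$-cluster point of $\tau_\alpha$ in $JL_2(\FF_\alpha)^*$, exploit the separability of $JL_2(\FF_\alpha)$ (automatic since $\FF_\alpha$ is countable, so the dual ball is weak$^*$-metrizable) to pass to a subsequence weak$^*$-convergent to~$0$ in $JL_2(\FF_\alpha)^*$, and refine further to a semi-summable subsequence $\tilde\tau_\alpha$ via Lemma~\ref{lem:summable}. If condition~$(\star)$ holds for $\tilde\tau_\alpha$ against $\FF_\alpha$, invoke Lemma~\ref{KeyLemmaEfremov} with $\SS=\{\tilde\tau_\beta:\beta<\alpha\}$ (a countable family) to obtain $N_\alpha\subseteq\bigcup_k\supp_\N \tilde\tau_\alpha$ satisfying (i)--(iii), and adjoin $N_\alpha$ to the family. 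The maximality witness $M_\alpha$ is handled in parallel, ideally by arranging $N_\alpha\subseteq M_\alpha$ via a mild strengthening of Lemma~\ref{KeyLemmaEfremov} (the proof of that lemma picks $t_k\in A\setminus A_k$; restricting instead to $t_k\in(A\cap M_\alpha)\setminus A_k$ goes through whenever $A\cap M_\alpha$ is not covered by finitely many elements of~$\FF_\alpha$), so that a single $N_\alpha$ plays the role of both Cesàro witness and maximality witness. At limit stages, take unions.

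To verify the conclusion, let $(x_n^*)$ be a bounded sequence in $JL_2(\FF_+)^*$ with $0$ as a weak$^*$-cluster point. By norm-density of finitely supported elements, choose finitely supported $y_n^*$ with $\|y_n^*-x_n^*\|<2^{-n}$: then $(y_n^*)$ is bounded, still has $0$ as a weak$^*$-cluster point, and any subsequence of $(y_n^*)$ whose Cesàro means are weak$^*$-null transfers to the parallel subsequence of $(x_n^*)$ because the Cesàro means of $(y_{n_k}^*-x_{n_k}^*)$ vanish in norm at rate $O(1/k)$. The finitely many $e_N^*$-atoms of each $y_n^*$ collectively form a countable subfamily of~$\FF_+$, hence are contained in some $\FF_\gamma$ with $\gamma<\omega_1$ by cofinality, and the weak$^*$-cluster condition descends to $JL_2(\FF_\beta)^*$ for every $\beta\geq\gamma$. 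By the bookkeeping, at some stage $\beta\geq\gamma$ the sequence $(y_n^*)$ appears as $\tau_\beta$ and we construct $\tilde\tau_\beta$. Its Cesàro means are weak$^*$-null in $JL_2(\FF_+)^*$: they vanish on each $e_j$ by semi-summability; on $\chi_N$ for $N\in\FF_\beta$ because $\tilde\tau_\beta$ is weak$^*$-null in $JL_2(\FF_\beta)^*$; on $\chi_{N_\delta}$ for $\delta>\beta$ by property~(iii) of Lemma~\ref{KeyLemmaEfremov} applied at stage~$\delta$ with $\tilde\tau_\beta\in\SS$; and when $(\star)$ fails at stage~$\beta$, the union $\bigcup_k\supp_\N y_{n_k}^*$ is covered by finitely many elements of~$\FF_\beta$, so almost-disjointness forces $y_{n_k}^*(\chi_N)\to 0$ (hence the Cesàro means $\to 0$) for every $N\in\FF_+\setminus\FF_\beta$ automatically.

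The main obstacle is the transfinite bookkeeping: ensuring every bounded sequence of finitely supported dual elements with $0$ as a weak$^*$-cluster point is processed at a stage when all its $e_N^*$-atoms already belong to the current family, and simultaneously guaranteeing that the maximality additions do not introduce new sets $\chi_N$ on which the Cesàro averages of previously extracted $\tilde\tau_\beta$ might fail to vanish. Once the bookkeeping is set up and Lemma~\ref{KeyLemmaEfremov} is (slightly) adapted to serve maximality as well, the rest is a matter of assembling the pieces.
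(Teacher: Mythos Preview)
Your approach is essentially the paper's: build an $\omega_1$-tower of countable almost disjoint families, at each stage process one bounded finitely supported sequence through Lemmas~\ref{lem:summable} and~\ref{KeyLemmaEfremov}, and verify Ces\`aro weak$^*$-convergence in $JL_2(\FF_+)^*$ by splitting into the $e_j$'s, the $\chi_N$ with $N$ already in the family at the processing stage, and the $\chi_N$ with $N$ added later (handled by clause~(iii) of Lemma~\ref{KeyLemmaEfremov}). That verification is correctly sketched.

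The gap is in your handling of maximality. Your ``mild strengthening'' picks $t_k\in(A\cap M_\alpha)\setminus A_k$ so that $N_\alpha\subseteq M_\alpha$, but this requires $A\cap M_\alpha$ not to be covered by finitely many members of~$\FF_\alpha$; since $A=\bigcup_k\supp_\N\tilde\tau_\alpha$ and $M_\alpha$ are unrelated, $A\cap M_\alpha$ may well be finite or empty. You do not say what happens then, and as you yourself flag in the final paragraph, adding a separate maximality set without routing it through Lemma~\ref{KeyLemmaEfremov} would lose control of $\tilde\tau_\beta(\chi_N)$ for earlier~$\beta$. The paper sidesteps this entirely: for any infinite $N'=\{n_k:k\in\N\}$ the sequence $(e_{n_k}^*)_{k\in\N}$ is \emph{already} one of the bounded finitely supported sequences being enumerated (it lies in $\RR_0$). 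When it gets processed at some stage~$\alpha$, either $N'$ is covered by finitely many members of~$\hat\FF_\alpha$ (hence meets one of them in an infinite set), or else $A=\bigcup_k\supp_\N e_{n_k}^*=N'$ and the set $N$ produced by Lemma~\ref{KeyLemmaEfremov} is automatically an infinite subset of~$N'$. Maximality thus falls out of the same mechanism; no separate enumeration $(M_\alpha)$ and no modified lemma are needed, and the obstacle you identify simply does not arise.
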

\begin{proof}
	Let $\{ \eta_\gamma=(\eta_{\gamma}^1,\eta_{\gamma}^2): 0<\gamma< \omega_1\}$ be an enumeration of 
	$\omega_1\times \omega_1$ with $\eta_{\gamma}^1 < \gamma$ for every $0<\gamma < \omega_1$.
	By transfinite induction on~$\alpha<\omega_1$, we will construct an increasing chain $\(\FF_\alpha\)_{\alpha<\omega_1}$
	of countable almost disjoint families and an increasing chain $\(\SS_\alpha\)_{\alpha<\omega_1}$ where each $\SS_\alpha$
	is a countable family of semi-summable sequences of finitely supported elements of~$JL_2(\FF_\alpha)^*$.
	In each step, the set $\RR_\alpha$ of all bounded sequences of finitely supported elements of~$JL_2(\FF_\alpha)^*$ has cardinality~$\mathfrak{c}=\aleph_1$
	and is enumerated as 
	$$
		\RR_\alpha=\{r_\xi: \, \xi \in \{\alpha\}\times \omega_1\}.
	$$

	Let $\FF_0$ be any countable almost disjoint family with $\N=\bigcup  \FF_0$ and let 
	$\SS_0$ be any countable family of semi-summable weak$^*$-null sequences of finitely supported elements of~$JL_2(\FF_0)^*$.
	
	Suppose now that $\(\FF_\gamma\)_{\gamma<\alpha}$ and $\(\SS_\gamma\)_{\gamma<\alpha}$ are already defined
	for some $0<\alpha<\omega_1$. Take $\hat{\FF}_\alpha:= \bigcup_{\gamma <\alpha} \FF_\gamma$ and $\hat{\SS}_\alpha:=\bigcup_{\gamma < \alpha} \SS_\gamma$.
	Note that $\hat{\FF}_\alpha$ is a {\em countable} almost disjoint family and that $\hat{\SS}_\alpha$ is a {\em countable}
	family of semi-summable sequences of finitely supported elements of $JL_2(\hat{\FF}_\alpha)$.
	
	Let $\(x_n^*\)_{n\in \N}:=r_{\eta_\alpha}\in \RR_{\eta_{\alpha}^1}$, which is already defined since $\eta_{\alpha}^1<\alpha$.
	That is, $\(x_n^*\)_{n\in \N}$ is a bounded sequence of finitely supported elements of the space $JL_2(\FF_{\eta_{\alpha}^1})^* \sub JL_2(\hat{\FF}_\alpha)$. 
	We now distinguish several cases:
	
	{\em Case~1}. If $0$ is not a weak$^*$-cluster point of $\(x_n^*\)_{n\in \N}$ 
	in $JL_2(\hat{\FF}_\alpha)^*$, then we set $\FF_\alpha:=\hat{\FF}_\alpha$ and $\SS_\alpha:=\hat{\SS}_\alpha$.
	
	{\em Case~2}. If $0$ is a weak$^*$-cluster point of $\(x_n^*\)_{n\in \N}$ in $JL_2(\hat{\FF}_\alpha)^*$, then we can take a weak$^*$-null subsequence 
	$\(x_{n_k}^*\)_{k \in \N}$, because $JL_2(\hat{\FF}_\alpha)$ is separable (bear in mind that $\hat{\FF}_\alpha$ is countable) and so 
	bounded subsets of $JL_2(\hat{\FF}_\alpha)^*$ are weak$^*$-metrizable.
	By passing to a further subsequence, not relabeled, we can assume that $\(x_{n_k}^*\)_{k \in \N}$ is semi-summable
	(apply Lemma~\ref{lem:summable}). 
	\begin{itemize}
	\item If $\bigcup_{k \in \N} \supp_\N x_{n_k}^*$ is contained in a finite union of elements of $\hat{\FF}_\alpha$, 
	then we set $\FF_\alpha:=\hat{\FF}_\alpha$ and $\SS_\alpha:=\hat{\SS}_\alpha$.
	\item If not, then we apply Lemma \ref{KeyLemmaEfremov} to $\hat{\FF}_\alpha$, $\hat{\SS}_\alpha$ and $\(x_{n_k}^*\)_{n\in \N}$ in order to obtain 
	an infinite set $N \sub \bigcup_{k \in \N} \supp_\N x_{n_k}^*$ satisfying the following conditions:
	\begin{enumerate}
		\item[(i)] $\hat{\FF}_\alpha \cup \{N\}$ is an almost disjoint family;
		\item[(ii)] $\lim_{k\to \infty}\frac{1}{k}(x_{n_1}^*+x_{n_2}^*+\ldots+x_{n_k}^*)(\chi_N)=0$;
		\item[(iii)] $\lim_{k\to \infty} \frac{1}{k}(y_{1}^*+y_{2}^*+\ldots+y_{k}^*)(\chi_N)=0$ 
		for every $\(y_k^*\)_{k \in \N} \in \hat{\SS}_\alpha$.
	\end{enumerate}
		In this case, we define $\FF_\alpha := \hat{\FF}_\alpha \cup \{N\}$ and $\SS_\alpha: =\hat{\SS}_\alpha \cup \{\(x_{n_k}^*\)_{k\in \N}\}$. 
	\end{itemize}
	
	This finishes the inductive construction. We claim that 
	$$
		\FF_+:= \bigcup_{\alpha<\omega_1} \FF_\alpha
	$$ 
	is a MAD family satisfying the required property. 
	Clearly, $\FF_+$ is almost disjoint. 
	
	For the maximality,
	take any infinite set $N'=\{n_k: k \in \N\} \sub \N$. Note that $\(e_{n_k}^*\)_{k\in \N}\in \RR_0$, hence there is
	$0<\alpha<\omega_1$ such that $\(e_{n_k}^*\)_{k\in \N}=r_{\eta_\alpha}$, that is, 
	$\(e_{n_k}^*\)_{k\in \N}$ is the sequence considered at step~$\alpha$ in the inductive construction. 
	It is easy to check that $0$ is not a weak$^*$-cluster point of $\(e_{n_k}^*\)_{k\in \N}$ in $JL_2(\hat{\FF}_\alpha)^*$
	if and only if $N'$ is contained in a finite union of elements of~$\hat{\FF}_\alpha$; in this case, there is
	$N''\in \hat{\FF}_\alpha \sub \FF_+$ such that $N'\cap N''$ is infinite.
	On the other hand, if $0$ is a weak$^*$-cluster point of $(e_{n_k}^*)_{k\in \N}$ in $JL_2(\hat{\FF}_\alpha)^*$, then 
	we find in step~$\alpha$ an infinite set $N \sub N'$ such that $\hat{\FF}_\alpha \cup \{N\}=\FF_\alpha \sub \FF_+$. This shows
	that $\FF$ is a MAD family.
	
	Let $\(x_n^*\)_{n \in \N}$ be a bounded sequence in $JL_2(\FF_+)^*$ for which $0$ is a weak$^*$-cluster point.
	We can assume without loss of generality that each $x_n^*$ is finitely supported (because finitely supported
	elements are norm-dense in $JL_2(\FF_+)^*$). Then $\(x_n^*\)_{n\in \N}$ belongs to $\bigcup_{\beta<\omega_1}\RR_\beta$
	and so there is $0<\alpha<\omega_1$ such that $\(x_n^*\)_{n\in \N}=r_{\eta_\alpha}$.
	Note that $0$ is a weak$^*$-cluster point of $\(x_n^*\)_{n \in \N}$ in $JL_2(\hat{\FF}_\alpha)^*$, so we are in Case~2 of the inductive construction
	at step~$\alpha$. Therefore, one of the following conditions holds:
	
	\textit{Condition 1: $\bigcup_{k \in \N} \supp_\N x_{n_k}^*$ is contained in a finite union of elements of $\hat{\FF}_\alpha$.} 
	In this case, we claim that $\(x_{n_k}^*\)_{k \in \N}$ is weak$^*$-null in $JL_2(\FF_+)^*$. Indeed, since
	$\(x_{n_k}^*\)_{k \in \N}$ is weak$^*$-null in $JL_2(\hat{\FF_\alpha})^*$, it suffices to show that
	$\lim_{k\to \infty}x_{n_k}^*(\chi_{N'})=0$ for every $N'\in \FF_+\setminus \hat{\FF}_\alpha$. 
	To this end, for each $k\in \N$, we write
	$$
		x_{n_k}^*=\sum_{m \in \N} a_{m,k} \, e_m^* + \sum_{N \in \hat{\FF}_\alpha} a_{N,k} \, e_N^* 
	$$
	with $\(a_{m,k}\)_{m\in \N} \in \ell_1$ and $\(a_{N,k}\)_{N\in \hat{\FF}_\alpha} \in \ell_2(\hat{\FF}_\alpha)$.
	Choose $N_1,N_2,\dots,N_p\in \hat{\FF}_\alpha$ such that $\bigcup_{k \in \N} \supp_\N x_{n_k}^* \sub \bigcup_{i=1}^p N_i$. Given any
	$N'\in \FF_+ \setminus \hat{\FF}_\alpha$, we have
	\begin{equation}\label{eqn:NoFalpha}
		x_{n_k}^*(\chi_{N'})=\sum_{m\in N'} a_{m,k}=\sum_{m\in \bigcup_{i=1}^p N'\cap N_i} a_{m,k}
		\quad
		\mbox{for all }k\in \N.
	\end{equation}
	Since $\bigcup_{i=1}^p N'\cap N_i$ is finite and $\lim_{k\to \infty}a_{m,k}=\lim_{k\to \infty}x_{n_k}^*(e_m)=0$
	for every $m\in \N$, from \eqref{eqn:NoFalpha} we get $\lim_{k\to \infty}x_{n_k}^*(\chi_{N'})=0$, as desired.
	
	\textit{Condition 2: $\bigcup_{k \in \N} \supp_\N x_{n_k}^*$ is not contained in a finite union of elements of~$\hat{\FF}_\alpha$.}
	Then
		\begin{equation}\label{eqn:JJ}
			\lim_{k\to \infty} \frac{1}{k}(x_{n_1}^*+x_{n_2}^* + \ldots + x_{n_k}^*)(\chi_{N'})=0
		\end{equation}
	for every $N' \in \FF_+ \setminus \hat{\FF}_\alpha$. Indeed, for $N'=N$ this follows from the very construction at step~$\alpha$.
	If $N'\in \FF_+ \setminus \FF_\alpha$, then $N'$ is added to~$\FF_+$ at step $\beta$ for some $\alpha<\beta <\omega_1$ and~\eqref{eqn:JJ}
	holds because $\(x_{n_k}^*\)_{k\in \N} \in \SS_\alpha \sub \hat{\SS}_\beta$.    
	
	Similarly as before, \eqref{eqn:JJ} implies that
	$\(\frac{1}{k}(x_{n_1}^*+x_{n_2}^* + \ldots + x_{n_k}^*)\)_{k\in \N}$ is weak$^*$-null in $JL_2(\FF_+)^*$.
	The proof is complete.
\end{proof}

\begin{cor}\label{cor:Efremov}
	Under CH, there exists a MAD family $\FF_+$ such that $JL_2(\FF_+)$ has property~($\EE$).
\end{cor}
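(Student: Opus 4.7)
Let $\FF_+$ be the MAD family provided by Theorem~\ref{theoMADfamilywithE}. My plan is to combine that theorem with the fact, recalled in the introduction, that every Johnson-Lindenstrauss space enjoys property~($\CC$). Fix a convex bounded set $C \sub JL_2(\FF_+)^*$ and an element $x^* \in \overline{C}^{w^*}$; the goal is to produce a sequence in $C$ converging weak$^*$ to~$x^*$.

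First, I apply property~($\CC$) to obtain a countable set $D \sub C$ with $x^* \in \overline{D}^{w^*}$. If $x^* \in D$, the constant sequence at $x^*$ already works, so assume $x^* \notin D$ and enumerate $D = \{d_n^*: n \in \N\}$. Then $x^*$ is a weak$^*$-cluster point of the sequence $(d_n^*)_{n \in \N}$: if some weak$^*$-open neighborhood $U$ of $x^*$ intersected $D$ in only finitely many points $y_1,\ldots,y_m$, then $U \setminus \{y_1,\ldots,y_m\}$ would still be a weak$^*$-open neighborhood of $x^*$ (as singletons are weak$^*$-closed and distinct from $x^*$) yet disjoint from~$D$, contradicting $x^* \in \overline{D}^{w^*}$.

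Next, I translate by $x^*$ and consider the bounded sequence $(d_n^* - x^*)_{n \in \N}$, for which $0$ is a weak$^*$-cluster point. Theorem~\ref{theoMADfamilywithE} then yields a subsequence $(d_{n_k}^*)_{k \in \N}$ such that
$$
w^*-\lim_{k\to \infty} \frac{1}{k} \sum_{i=1}^{k}(d_{n_i}^* - x^*) = 0,
$$
equivalently, $\frac{1}{k} \sum_{i=1}^{k} d_{n_i}^* \to x^*$ in the weak$^*$-topology. By convexity of~$C$, each Ces\`aro mean belongs to~$C$, producing the desired sequence in~$C$ that weak$^*$-converges to~$x^*$.

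The bulk of the work has already been carried out in Theorem~\ref{theoMADfamilywithE}; there is no real obstacle left in the corollary itself. The only point that deserves attention is the routine topological passage from ``$x^*\in \overline{D}^{w^*}$ for some countable $D\sub C$'' to ``$x^*$ is a weak$^*$-cluster point of some enumeration of~$D$'', which is what lets us feed the hypothesis of the theorem.
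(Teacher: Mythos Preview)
Your proof is correct and follows essentially the same approach as the paper: reduce to finding a sequence in~$C$ with $x^*$ as a weak$^*$-cluster point, then apply Theorem~\ref{theoMADfamilywithE} and take Ces\`aro means. The paper first translates to $x^*=0$ and invokes countable tightness of $(B_{JL_2(\FF_+)^*},w^*)$ (noting explicitly that property~($\CC$) works just as well), whereas you use property~($\CC$) directly and translate at the end; these are cosmetic differences only.
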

\begin{proof}
	Let $\FF_+$ be the MAD family given by Theorem~\ref{theoMADfamilywithE}.	
	By linearity, it is enough to prove that if $C \sub B_{JL_2(\FF_+)^*}$ is a convex set with $0\in \overline{C}^{w^*}$, 
	then there exists a weak$^*$-null sequence contained in~$C$. This is obvious if $0\in C$, so we assume that $0\not\in C$.
	Since $(B_{JL_2(\FF_+)^*},w^*)$ is sequential (see \cite[Theorem~3.1]{gon3}), it has countable tightness and so
	there exists a sequence $\(x_n^*\)_{n\in \N}$ in $C$ with 
	\begin{equation}\label{eqn:JJJ}
		0  \in \overline{\{x_n^*: n \in \N\}}^{w^*}.
	\end{equation}
	Note that the existence of such a sequence can also be deduced from Corson's property~($\CC$) of $JL_2(\FF_+)$.
	Since $0$ is a weak$^*$-cluster point of~$\(x_n^*\)_{n\in \N}$ (by~\eqref{eqn:JJJ} and the fact that $x_n^*\neq 0$ for all $n\in \N$),
	the conclusion follows from Theorem~\ref{theoMADfamilywithE}, which ensures the existence of a subsequence 
	$\(x^*_{n_k}\)_{k \in \N}$ such that the sequence of its arithmetic means 
	$\(\frac{1}{k}(x_{n_1}^*+x_{n_2}^* + \ldots + x_{n_k}^*)\)_{k\in \N}$ (which is contained in the convex set~$C$)  
	is weak$^*$-null.
\end{proof}

\section{A Johnson-Lindenstrauss space without Property ($\EE$)}\label{section:JLwithoutE}

Given any infinite almost disjoint family~$\FF$, we have $0 \in \overline{\{e_n^*:n\in \N\}}^{w^*}$ in $JL_2(\FF)^*$, because
$w^*-\lim_{k\to \infty}e_{N_k}^*=0$ for every sequence $\(N_k\)_{k\in \N}$ of pairwise distinct elements of~$\FF$ and
$w^*-\lim_{n\in N}e_n^*=e_N^*$ for all $N\in\FF$.
In this section we will prove that, under CH, there exists a MAD family $\FF_-$ for which 
$\co(\{e_n^\ast: n\in \N \})$ does not contain weak$^*$-null sequences and consequently $JL_2(\FF_-)$ does not have property~($\EE$).
In order to construct $\FF_-$, we will focus on the matrices $\(\lambda_{i,j}\)_{i,j \in \N}$ determined by sequences 
$\(\sum_{j\in \N} \lambda_{i,j} e_j^*\)_{i\in \N}$ in $\co(\{e_n^\ast: n\in \N \})$.

\begin{defi}
We say that a matrix $\(\lambda_{i,j}\)_{i,j \in \N} \in [0,1]^{\N \times \N}$ is 
\begin{enumerate}
\item[(i)] {\em convex} if $\sum_{j \in \N } \lambda_{i,j} =1$ for every $i \in \N$; 
\item[(ii)] {\em null} if $\lim_{i\to \infty} \lambda_{i,j} =0$ for every $j \in \N$.
\end{enumerate}
\end{defi}

\begin{lem}
\label{LemmaAuxiliarADfamilies}
Let $\(N_r\)_{r\in \N}$ be a sequence of subsets of~$\N$ and let $\(\lambda_{i,j}\)_{i,j \in \N}$ be a convex null matrix. 
If $\lim_{i\to \infty} \sum_{j \in N_r} \lambda_{i,j}=0$ for every $r\in \N$, then there exists an infinite set $N' \sub \N$ 
such that $N'\cap N_r$ is finite for every $r\in \N$ and  
$$
	\limsup_{i\to \infty} \sum_{j \in N'} \lambda_{i,j} \geq \frac{1}{2}.
$$
\end{lem}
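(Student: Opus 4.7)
The plan is to build $N'$ as an increasing union $\bigcup_{k\in\N} F_k$ of finite sets $F_k\sub \N\setminus M_k$, where $M_k:=N_1\cup\cdots\cup N_k$, with each $F_k$ capturing at least half of the mass of some row $i_k$, for a sequence $i_k\to\infty$. The key preliminary observation is that, by the convexity of the matrix,
\[
\sum_{j\in \N\setminus M_k}\lambda_{i,j} \;=\; 1-\sum_{j\in M_k}\lambda_{i,j},
\]
and the latter sum tends to $0$ as $i\to\infty$ for each fixed $k$, since $M_k$ is a finite union of sets $N_r$ on each of which the row mass vanishes in the limit by hypothesis.

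I would then proceed by induction on $k$: pick $i_1<i_2<\cdots$ with $i_k$ large enough that $\sum_{j\in M_k}\lambda_{i_k,j}<\tfrac{1}{4}$, so that $\sum_{j\in\N\setminus M_k}\lambda_{i_k,j}>\tfrac{3}{4}$. Since this is a convergent non-negative series indexed by a countable set, there is a finite $F_k\sub \N\setminus M_k$ with $\sum_{j\in F_k}\lambda_{i_k,j}\geq \tfrac{1}{2}$. Set $N':=\bigcup_{k\in\N} F_k$.

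It remains to verify the three required properties of $N'$. For the almost disjointness with $\FF':=\{N_r:r\in\N\}$: given $r\in\N$, one has $F_k\cap N_r=\emptyset$ for all $k\geq r$ because $F_k\sub\N\setminus M_k$, so $N'\cap N_r \sub F_1\cup\cdots\cup F_{r-1}$ is finite. For the limsup bound: at each row $i_k$ one has $\sum_{j\in N'}\lambda_{i_k,j}\geq \sum_{j\in F_k}\lambda_{i_k,j}\geq \tfrac{1}{2}$, and $i_k\to\infty$ by construction. For the infinitude of $N'$: if $N'$ were finite, the nullness of the matrix would force $\sum_{j\in N'}\lambda_{i,j}\to 0$ as $i\to\infty$, contradicting the previous line.

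The argument is fairly direct and I do not expect a real obstacle; the only subtle point is exploiting convexity (each row summing to $1$) to turn the hypothesis ``the row mass on $M_k$ vanishes'' into ``the row mass on some finite subset of $\N\setminus M_k$ is at least $\tfrac{1}{2}$'', rather than having the escaping mass leak to infinity in some unusable, purely asymptotic way.
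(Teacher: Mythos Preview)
Your proof is correct and follows essentially the same approach as the paper: both build $N'=\bigcup_k F_k$ with each finite $F_k\subseteq \N\setminus(N_1\cup\cdots\cup N_k)$ chosen to carry at least half the mass of a suitably late row $i_k$, then verify almost disjointness, the $\limsup$ bound, and infinitude of~$N'$ exactly as you do. The only cosmetic difference is that the paper uses the threshold $\tfrac12$ instead of your $\tfrac14$ when bounding the mass on $N_1\cup\cdots\cup N_k$.
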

\begin{proof}
Since $\lim_{i\to \infty} \sum_{j \in N_r} \lambda_{i,j}=0$ for every $r\in \N$, 
we can find a strictly increasing sequence $\(n_r\)_{r\in \N}$ in~$\N$ such that 
$$
	\sum_{j \in N_1\cup N_2 \cup \ldots \cup N_r} \lambda_{n_r,j} \leq
	\sum_{k=1}^r\sum_{j\in N_k}\lambda_{n_r,j}< \frac{1}{2}
	\quad\mbox{for all }r\in\N.
$$
For each $r\in \N$ we have $\sum_{j \in \N} \lambda_{n_r,j} =1$ and, therefore, we can find a finite set $F_r\sub \N \setminus \( N_1\cup N_2 \cup \ldots \cup N_r\)$ 
in such a way that $\sum_{j \in F_r} \lambda_{n_r,j}\geq \frac{1}{2}$. 

Set $N' := \bigcup_{r \in \N} F_r$. Clearly, $N' \cap N_r \sub \bigcup_{s<r}F_s$ is finite for every $r \in \N$.
Notice that $\sum_{j \in N'} \lambda_{n_r,j} \geq \sum_{j \in F_r} \lambda_{n_r,j}\geq \frac{1}{2}$ for every $r\in \N$, so 
$$
	\limsup_{i\to\infty} \sum_{j \in N'} \lambda_{i,j} \geq \frac{1}{2}.
$$ 
Bearing in mind that for each finite set $F\sub \N$ we have $\lim_{i\to \infty} \sum_{j \in F} \lambda_{i,j} =0$, we conclude
that $N'$ is infinite.
\end{proof}

\begin{lem}
\label{CoroMatrices}
Let $\{\(\lambda_{i,j}^\alpha\)_{i,j\in \N}: \alpha < \w_1\}$ be a family of convex null matrices of cardinality~$\aleph_1$. Then there exists an 
almost disjoint family $\FF$ such that for every $\alpha < \w_1$ there is $N'_\alpha \in \FF$ with $\limsup_{i\to \infty} \sum_{j \in N'_\alpha} \lambda_{i,j}^\alpha >0$.
\end{lem}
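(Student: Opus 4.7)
The plan is to construct $\FF$ by transfinite recursion of length $\omega_1$, producing at each stage $\alpha$ a set $N'_\alpha \sub \N$ that witnesses the required $\limsup$ condition for the matrix $(\lambda^\alpha_{i,j})_{i,j\in \N}$, and taking $\FF := \{N'_\alpha : \alpha < \omega_1\}$ at the end. The key observation that makes this feasible is that at every stage $\alpha < \omega_1$ the partial collection $\FF_\alpha := \{N'_\beta : \beta < \alpha\}$ is countable, which is precisely the setting of Lemma~\ref{LemmaAuxiliarADfamilies}.

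At stage $\alpha$ I would first ask whether some $N'_\beta$ with $\beta < \alpha$ already satisfies $\limsup_{i\to\infty} \sum_{j \in N'_\beta} \lambda^\alpha_{i,j} > 0$; if so, one simply sets $N'_\alpha := N'_\beta$, so no new set is added to~$\FF$ and the inductive almost-disjointness is trivially preserved. Otherwise, one has $\lim_{i\to\infty} \sum_{j \in N'_\beta} \lambda^\alpha_{i,j} = 0$ for every $\beta < \alpha$, so after enumerating $\FF_\alpha$ as a sequence $(N_r)_{r\in \N}$ (repeating entries or padding with~$\emptyset$ should $\FF_\alpha$ happen to be finite, neither of which disturbs the hypotheses or the conclusion), Lemma~\ref{LemmaAuxiliarADfamilies} applies to the convex null matrix $(\lambda^\alpha_{i,j})$ and produces an infinite set $N'_\alpha \sub \N$ which is almost disjoint from every $N'_\beta$ and satisfies $\limsup_{i\to\infty} \sum_{j \in N'_\alpha} \lambda^\alpha_{i,j} \geq \tfrac{1}{2}$.

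The resulting family $\FF$ is almost disjoint because in both cases the chosen $N'_\alpha$ has finite intersection with every previously constructed $N'_\beta$ (trivially in the first case, by Lemma~\ref{LemmaAuxiliarADfamilies} in the second), and by construction each matrix $(\lambda^\alpha_{i,j})$ admits the witness $N'_\alpha \in \FF$. The main obstacle, and really the only one, is seeing that the case split is \emph{forced on us}: Lemma~\ref{LemmaAuxiliarADfamilies} requires that \emph{every} $N_r$ be ``small'' with respect to the matrix, so one cannot hope to extend $\FF_\alpha$ by a new almost-disjoint witness when some existing member is already ``large'' for $(\lambda^\alpha_{i,j})$; fortunately, in that situation we do not need to extend at all. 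The rest is a routine length-$\omega_1$ recursion in which the countability of each $\FF_\alpha$ is precisely what carries us through.
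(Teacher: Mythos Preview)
Your argument is correct and follows essentially the same route as the paper: a length-$\omega_1$ recursion with the same case split (either some previously built set already has positive $\limsup$ for $(\lambda^\alpha_{i,j})$, or all previous sets satisfy $\lim = 0$ and Lemma~\ref{LemmaAuxiliarADfamilies} yields a new almost-disjoint witness). The only cosmetic difference is that the paper tracks an increasing chain of countable families $\FF_\alpha$ rather than individual sets $N'_\alpha$, and initializes with an arbitrary countable almost disjoint family; neither change is material.
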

\begin{proof}
Let $\FF_0$ be a countable almost disjoint family including a set $N'_0 \sub \N$ for which $\limsup_{i\to \infty} 
\sum_{j \in N'_0} \lambda_{i,j}^0 >0$. Indeed, the existence of such~$\FF_0$ follows from Lemma~\ref{LemmaAuxiliarADfamilies}
applied to an arbitrary countable infinite almost disjoint family and the convex null matrix~$\(\lambda_{i,j}^0\)_{i,j\in \N}$.

We now construct an increasing chain $\(\FF_\alpha\)_{\alpha<\omega_1}$ of countable infinite almost disjoint families by transfinite induction on~$\alpha$.
Suppose that $0<\alpha < \omega_1$ and that $\FF_\beta$ is already constructed for every $\beta<\alpha$. 
If 
\begin{equation}\label{eqn:4J}
	\lim_{i\to \infty} \sum_{j \in N} \lambda_{i,j}^\alpha=0
	\quad\mbox{for every }N \in \bigcup_{\beta<\alpha}\FF_\beta,
\end{equation}
then we can apply Lemma \ref{LemmaAuxiliarADfamilies} to $\bigcup_{\beta<\alpha}\FF_\beta$ (which is countable) and~$\(\lambda_{i,j}^\alpha\)_{i,j\in \N}$
in order to get an infinite set $N'_\alpha \sub \N$ such that $\FF_{\alpha}:=( \bigcup_{\beta<\alpha}\FF_\beta ) \cup \{N'_\alpha\}$ is
almost disjoint and 
$$
	\limsup_{i\to \infty} \sum_{j \in N'_\alpha} \lambda^\alpha_{i,j}>0.
$$ 
If \eqref{eqn:4J} fails, then we just take $\FF_{\alpha}:=\bigcup_{\beta<\alpha}\FF_\beta$.

It is clear that $\FF := \bigcup_{\beta<\w_1}\FF_\beta$ is the desired almost disjoint family.
\end{proof}

The previous lemma combined with the fact that under CH there are only $\aleph_1$-many 
convex null matrices in $[0,1]^{\N \times \N}$ provide a MAD family for which the corresponding Johnson-Lindenstrauss space does not have property ($\EE$):

\begin{theo}
Under CH, there exists a MAD family $\FF_-$ such that $JL_2(\FF_-)$ does not have property~($\EE$).
\end{theo}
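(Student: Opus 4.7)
The plan is to combine Lemma~\ref{CoroMatrices} with a cardinality count under CH and a Zorn extension, then translate between sequences in the convex bounded set $C := \co(\{e_n^* : n\in\N\}) \subseteq B_{JL_2(\FF_-)^*}$ and convex matrices. The opening remark of this section already gives $0 \in \overline{\{e_n^* : n \in \N\}}^{w^*} \subseteq \overline{C}^{w^*}$, so to defeat property~($\EE$) it suffices to construct a MAD family $\FF_-$ for which $C$ contains no weak$^*$-null sequence converging to~$0$.

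Under CH, $|[0,1]^{\N \times \N}| = \mathfrak{c} = \aleph_1$, so I would enumerate all convex null matrices as $\{(\lambda_{i,j}^\alpha)_{i,j\in\N} : \alpha < \omega_1\}$. Applying Lemma~\ref{CoroMatrices} to this family yields an almost disjoint family $\FF$ such that for every $\alpha<\omega_1$ there exists $N_\alpha' \in \FF$ with $\limsup_{i\to\infty}\sum_{j\in N_\alpha'} \lambda_{i,j}^\alpha > 0$. Using Zorn's lemma I would then extend $\FF$ to a MAD family $\FF_-\supseteq\FF$; the key property is preserved, since it is an existence statement about members of~$\FF$, which remain in $\FF_-$. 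To finish, suppose for contradiction that $(x_i^*)_{i\in \N}\subseteq C$ is weak$^*$-null in $JL_2(\FF_-)^*$. Each $x_i^*$ is a finite convex combination $\sum_j \lambda_{i,j} e_j^*$ with $\lambda_{i,j}\geq 0$ and $\sum_j \lambda_{i,j}=1$, so $(\lambda_{i,j})_{i,j\in\N}$ is a convex matrix. Testing against $e_j \in c_0 \subseteq JL_2(\FF_-)$ gives $\lambda_{i,j} = x_i^*(e_j) \to 0$ as $i\to\infty$, so the matrix is null; say it coincides with $(\lambda_{i,j}^\alpha)$ for some $\alpha<\omega_1$. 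But then testing against $\chi_{N_\alpha'} \in JL_2(\FF_-)$ yields $x_i^*(\chi_{N_\alpha'}) = \sum_{j \in N_\alpha'} \lambda_{i,j}^\alpha \to 0$, contradicting the choice of $N_\alpha'$. Hence $C$ admits no weak$^*$-null sequence while $0 \in \overline{C}^{w^*}$, so property~($\EE$) fails.

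There is essentially no obstacle beyond what has already been done: the substantive combinatorics is packaged in Lemma~\ref{LemmaAuxiliarADfamilies} and its transfinite iteration Lemma~\ref{CoroMatrices}. The only check in the argument above is the dictionary between sequences $(x_i^*)\subseteq C$ and convex matrices $(\lambda_{i,j})$: weak$^*$-convergence to $0$ in $JL_2(\FF_-)^*$ is equivalent to the conjunction of \emph{$(\lambda_{i,j})$ being null} (from testing on $c_0$) and \emph{$\sum_{j\in N}\lambda_{i,j}\to 0$ for every $N\in\FF_-$} (from testing on each $\chi_N$), and Lemma~\ref{CoroMatrices} under CH is tailored precisely to block the second condition whenever the first holds.
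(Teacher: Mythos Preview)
Your proposal is correct and follows essentially the same approach as the paper: apply Lemma~\ref{CoroMatrices} to the family of all convex null matrices (of cardinality~$\aleph_1$ under CH), then translate between sequences in $\co(\{e_n^*:n\in\N\})$ and convex matrices exactly as you do. The only difference is that you invoke Zorn's lemma to extend the resulting almost disjoint family to a MAD family, whereas the paper observes that the family produced by Lemma~\ref{CoroMatrices} is \emph{already} maximal: given any infinite $N=\{n_k:k\in\N\}\subseteq\N$, the matrix $\lambda_{i,j}:=1$ if $j=n_i$ and $0$ otherwise is convex and null, and the witnessing $N'\in\FF_-$ with $\limsup_i\sum_{j\in N'}\lambda_{i,j}>0$ must meet $N$ in an infinite set. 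Your Zorn step is harmless but unnecessary.
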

\begin{proof}
Let $\FF_-$ be the almost disjoint family given by Lemma~\ref{CoroMatrices}
applied to the family of {\em all} convex null matrices (which has cardinality~$\aleph_1$ under CH).
We claim that $\FF_-$
is maximal. Indeed, if $N=\{n_k:k\in \N\} \sub \N$ is an infinite set, then we can define a convex null matrix $\(\lambda_{i,j}\)_{i,j \in \N}$ 
by the formula $\lambda_{i,j}:=1$ if $n_i=j$ and $\lambda_{i,j}:=0$ otherwise. 
Then there exists $N' \in \FF_-$ such that $\lim \sup _{i\to\infty} \sum_{j \in N'} \lambda_{i,j} \geq \frac{1}{2}$, which
clearly implies that $N \cap N' $ is infinite. Therefore, $\FF_-$ is a MAD family.

We now prove that $JL_2(\FF_-)$ does not have property~($\EE$). Let 
$$
	C:=\co(\{e_n^\ast: \, n\in \N \}) \sub JL_2(\FF_-)^*.
$$ 
As we explained at the beginning of this section, $0 \in \overline{C}^{w^*}$. 
Thus, it is enough to prove that no sequence in~$C$ is weak$^*$-convergent to zero.
Let $\(x_i^\ast\)_{i \in \N}$ be an arbitrary sequence in~$C$ 
and, for each $i \in \N$, write $x_i^\ast=\sum_{j\in \N}\lambda_{i,j}e_j^\ast$,
where $\lambda_{i,j}\geq 0$ and $\sum_{j\in \N}\lambda_{i,j}=1$ (the sum being finitely supported).
Hence 
$$
	M:=\(\lambda_{i,j}\)_{i,j\in \N}=\(x_i^\ast(e_j)\)_{i,j\in \N}\in [0,1]^{\N\times\N}
$$ 
is a convex matrix. Clearly, if $M$ is not null, then $\(x_i^\ast\)_{i \in \N}$ is not weak$^*$-null. On the other hand,
if $M$ is null, then there exists $N \in \FF_-$ such that 
$$
	\limsup_{i\to\infty} \, x_i^* (\chi_N)= \limsup_{i\to\infty} \, \sum_{j \in N} \lambda_{i,j} >0,
$$ 
so the sequence  $\(x_i^\ast\)_{i \in \N}$ cannot be weak$^*$-null either.
\end{proof}

\section{Further remarks on weak$^*$-sequential properties}\label{section:P}

Let $X$ be a Banach space. Given any set $C \sub X^*$, we denote by $S_1(C) \sub X^*$ the set of all limits of weak$^*$-convergent 
sequences contained in~$C$. Clearly, $X$ has property~($\EE$) if 
and only if $S_1(C)$ is weak$^*$-closed (equivalently, $S_1(C)=\overline{C}^{w^*}$) for every convex bounded set $C \sub X^\ast$.
The failed argument of~\cite{pli3} that all Johnson-Lindenstrauss spaces have property~($\EE$)
is based on the claim (see the proof of \cite[Proposition~8]{pli3}) that $S_1(C)$ is 
{\em norm-closed} for every convex bounded set $C \sub X^\ast$.
However, this is not always the case, as we show below.

\begin{defi}
We say that a Banach space $X$ has {\em property~($\PP$)} if $S_1(C)$ is 
norm-closed for every convex bounded set $C \sub X^\ast$.
\end{defi}

It is clear that property ($\mathcal{E}$) implies property~($\mathcal{P}$) and
that every Grothendieck space has property~($\mathcal{P}$). 

We next give an example
of a Banach space failing property~($\PP$). 
Recall that the cardinal~$\mathfrak{d}$ is defined as the least cardinality of a subset 
of~$\N^\N$ which is cofinal for the relation ``$f \leq^* g$ if and only if $f(i) \leq g(i)$ for all but finitely many $i$'s''.
One has $\aleph_1 \leq \mathfrak{d} \leq \mathfrak{c}$, but whether any of these 
are strict inequalities or equalities is independent of ZFC, see e.g.~\cite{dou2}. 

\begin{exa}\label{exa:l1Gamma}
	$\ell_1(\mathfrak{d})$ fails property ($\PP$).
\end{exa}
\begin{proof}
	For any function $f:\Nat \to \Nat$ we define $M_f:\Nat\times \Nat \to \mathbb{R}$ by
	$$
	M_f(i,j):=
	\begin{cases}
	1 & \text{if $j<f(i)$},\\
	\frac{1}{i} & \text{if $j\geq f(i)$}.
	\end{cases}
	$$
	Let $\Gamma \sub \Nat^\Nat$ be a family of functions with cardinality~$\mathfrak{d}$ which is cofinal for~$\leq^*$.
	Write $X:=\ell_1(\Gamma)$ and identify $X^*=\ell_\infty(\Gamma)$. For each $(i,j)\in \Nat \times \Nat$, define
	$x^*_{i,j}\in B_{X^*}=[-1,1]^\Gamma$ by declaring $x^*_{i,j}(f):=M_f(i,j)$ for all $f\in \Gamma$. Let $C \sub B_{X^*}$ be the convex hull of
	the $x^*_{i,j}$'s.
	
	Observe that for each $i\in \Nat$ we have $\frac{1}{i}\chi_\Gamma \in S_1(C)$, because
	$$
	w^*-\lim_{j\to \infty}x^*_{i,j} =\frac{1}{i}\chi_\Gamma.
	$$ 
	Therefore, $0 \in \overline{S_1(C)}^{\|\cdot\|}$. 
	
	We claim that $0\not \in S_1(C)$. Indeed, let $\(y_n^*\)_{n\in\N}$ be any sequence in~$C$ and, for each $n\in \Nat$, write
	$$
	y_n^*=\sum_{(i,j)\in \Nat\times\Nat}\lambda^{(n)}_{i,j} x^*_{i,j},
	$$ 
	where $\lambda^{(n)}_{i,j}\geq 0$ and $\sum_{(i,j)\in \Nat\times \Nat}\lambda^{(n)}_{i,j}=1$ (the sum being finitely supported).
	By contradiction, suppose that $\(y_n^*\)_{n\in\N}$ is weak$^*$-null. 
	
	{\sc Step~1.} Fix $i\in \Nat$. Then
	\begin{equation}\label{eqn:limitarrow}
	\lim_{n\to \infty}\sum_{j\in \Nat}\lambda^{(n)}_{i,j}=0,
	\end{equation}
	because for an arbitrary $f\in \Gamma$ and for every $n\in \Nat$ we have
	$$
	y_n^*(f)=\sum_{(k,j)\in \Nat\times\Nat}\lambda^{(n)}_{k,j} M_{f}(k,j) \geq
	\sum_{j\in \Nat}\lambda^{(n)}_{i,j} M_{f}(i,j) \geq 
	\frac{1}{i} \sum_{j\in \Nat} \lambda^{(n)}_{i,j}.
	$$
	Choose $n(i)\in \Nat$ such that
	\begin{equation}\label{eqn:arrow}
	\sum_{j\in \Nat}\lambda^{(n)}_{i,j} \leq \frac{1}{2^{i+1}}
	\quad
	\mbox{for every }n\geq n(i).
	\end{equation}
	Now we choose $\tilde{f}(i)\in \Nat$ large enough such that
	\begin{equation}\label{eqn:tails}
	\lambda^{(n)}_{i,j}=0 \quad\mbox{for every }j\geq \tilde{f}(i) \mbox{ and every }n<n(i).
	\end{equation}
	
	{\sc Step~2.}
	Pick $f\in \Gamma$ such that $\tilde{f}\leq^* f$ and fix $i_0\in \Nat$ such that $\tilde{f}(i)\leq f(i)$
	for every $i\geq i_0$. By~\eqref{eqn:limitarrow}, there is $n_0\in \Nat$ such that
	\begin{equation}\label{eqn:control}
	\sum_{i<i_0}\sum_{j\in\Nat}\lambda^{(n)}_{i,j} \leq \frac{1}{4} \quad
	\mbox{for every }n\geq n_0. 
	\end{equation}
	Observe that for each $n\in \Nat$ we have
	$$
	\sum_{i: \, n\geq n(i)} \sum_{j\in \Nat}\lambda^{(n)}_{i,j} \stackrel{\eqref{eqn:arrow}}{\leq} \sum_{i\in \Nat} \frac{1}{2^{i+1}}=\frac{1}{2}
	$$
	and so
	\begin{equation}\label{eqn:sum1}
	\sum_{i: \, n< n(i)} \sum_{j<\tilde{f}(i)}\lambda^{(n)}_{i,j} \stackrel{\eqref{eqn:tails}}{=} 
	\sum_{i: \, n< n(i)} \sum_{j\in\Nat}\lambda^{(n)}_{i,j}
	=
	1- \sum_{i: \, n\geq n(i)} \sum_{j\in\Nat}\lambda^{(n)}_{i,j} \geq \frac{1}{2}.
	\end{equation}
	Therefore, for every $n\geq n_0$ we have
	\begin{multline*}
	y_n^*(f)=
	\sum_{(i,j)\in \Nat\times \Nat}\lambda^{(n)}_{i,j}M_f(i,j) 
	\geq
	\sum_{\substack{i: \, n< n(i) \\ i\geq i_0}} \sum_{j< f(i)}\lambda^{(n)}_{i,j}
	\\ 
	\geq
	\sum_{\substack{i: \, n< n(i) \\ i\geq i_0}} \sum_{j< \tilde{f}(i)}\lambda^{(n)}_{i,j} 
	\stackrel{\eqref{eqn:control}}{\geq} 
	\sum_{i: \, n< n(i)} \sum_{j< \tilde{f}(i)}\lambda^{(n)}_{i,j}-\frac{1}{4}
	\stackrel{\eqref{eqn:sum1}}{\geq} \frac{1}{4},
	\end{multline*}
	which contradicts the fact that $\(y_n^*\)_{n\in\N}$ is weak$^*$-null.
\end{proof}

The remainder of this section is devoted to showing how the techniques of~\cite{pli3}
yield Theorem~\ref{theo:seq-dense-bidual} below. 
Recall first that a Banach space $X$ is said to have {\em property~($\EE'$)} (see~\cite{gon3}) if 
$S_{\omega_1}(C)=\overline{C}^{w^*}$ for every convex bounded set $C \sub X^*$. 
Here, for any ordinal $\alpha\leq \omega_1$, the $\alpha$-th $w^*$-sequential closure of a set $D \sub X^*$ is defined by transfinite induction 
as follows: $S_0(D):=D$, $S_{\alpha}(D):=S_1(S_\beta(D))$ if $\alpha=\beta+1$ and $S_{\alpha}(D):=\bigcup_{\beta<\alpha}S_\beta(D)$
if $\alpha$ is a limit ordinal.

\begin{theo}\label{theo:seq-dense-bidual}
	Let $X$ be a Banach space which is weak$^*$-sequentially dense in~$X^{**}$. Then: 
	\begin{enumerate}
	\item[(i)] $\overline{C}^{w^*}=\overline{S_1(C)}^{\|\cdot\|}=S_2(C)$ for every convex bounded set $C \sub X^*$; 
	\item[(ii)] $X$ has property ($\EE'$);
	\item[(iii)] $X$ has property~($\EE$) if and only if it has property~($\PP$).
	\end{enumerate}
\end{theo}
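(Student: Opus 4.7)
The plan is to establish (i) first, since (ii) and (iii) follow from it by short arguments. The inclusions $\overline{S_1(C)}^{\|\cdot\|}\sub S_2(C)\sub\overline{C}^{w^*}$ hold for any Banach space: a norm-convergent sequence is also weak$^*$-convergent, so $\overline{S_1(C)}^{\|\cdot\|}\sub S_1(S_1(C))=S_2(C)$; and $\overline{C}^{w^*}$ is weak$^*$-closed, hence weak$^*$-sequentially closed, and contains $C$, so it contains $S_1(C)$ and then $S_2(C)$.

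The crucial inclusion is $\overline{C}^{w^*}\sub\overline{S_1(C)}^{\|\cdot\|}$. First, $S_1(C)$ is convex, since convex combinations of weak$^*$-convergent sequences are weak$^*$-convergent, so $\overline{S_1(C)}^{\|\cdot\|}$ is closed convex. By Hahn-Banach it suffices to show that for every $x^*\in\overline{C}^{w^*}$ and every $x^{**}\in X^{**}$,
\[
\langle x^{**},x^*\rangle\leq \sup_{y^*\in S_1(C)}\langle x^{**},y^*\rangle.
\]
Fix such $x^*$ and $x^{**}$, and invoke the hypothesis to pick $(x_n)\sub X$ with $x_n\to x^{**}$ in the weak$^*$-topology of $X^{**}$, so that $L:=\langle x^{**},x^*\rangle=\lim_n\langle x_n,x^*\rangle$. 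Set $Y:=\overline{\spn}\{x_n:n\in\N\}$, a separable subspace of $X$. The restriction $\pi_Y\colon X^*\to Y^*$ is weak$^*$-to-weak$^*$ continuous, so $\pi_Y(x^*)\in\overline{\pi_Y(C)}^{w^*}$ in $Y^*$; because $Y$ is separable, $(B_{Y^*},w^*)$ is metrizable, so $\pi_Y(x^*)\in S_1(\pi_Y(C))$ and one can choose $(c_n^*)\sub C$ with $\langle x_i,c_n^*\rangle\to\langle x_i,x^*\rangle$ as $n\to\infty$ for every $i\in\N$.

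The key observation is that $x^{**}$ vanishes on $Y^\perp\sub X^*$: if $y^*|_Y=0$ then $\langle x_n,y^*\rangle=0$ for all $n$, so $\langle x^{**},y^*\rangle=\lim_n\langle x_n,y^*\rangle=0$. Hence $x^{**}$ descends through the quotient $X^*/Y^\perp\simeq Y^*$ to a functional $\tilde x^{**}\in Y^{**}$ satisfying $\langle x^{**},y^*\rangle=\langle\tilde x^{**},\pi_Y(y^*)\rangle$ for every $y^*\in X^*$. Consequently, any $y^*\in S_1(C)$ with $\pi_Y(y^*)=\pi_Y(x^*)$ gives $\langle x^{**},y^*\rangle=\langle\tilde x^{**},\pi_Y(x^*)\rangle=L$, the needed inequality. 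By Banach-Alaoglu the bounded sequence $(c_n^*)$ has weak$^*$-cluster points in $\overline{C}^{w^*}$, and every such cluster point restricts to $\pi_Y(x^*)$ on $Y$ since $(c_n^*|_Y)$ converges to $\pi_Y(x^*)$ in $Y^*$. \textbf{The main obstacle} is then to upgrade some weak$^*$-cluster point of $(c_n^*)$ to an actual weak$^*$-limit of a subsequence in $X^*$, placing it in $S_1(C)$; this is the delicate step where the hypothesis that $X$ is weak$^*$-sequentially dense in $X^{**}$ is used, via the diagonal techniques of \cite{pli3}, to obtain the requisite weak$^*$-angelicity for the sequence at hand.

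Given (i), parts (ii) and (iii) are immediate. For (ii), $S_2(C)\sub S_{\omega_1}(C)\sub\overline{C}^{w^*}$ together with (i) forces $S_{\omega_1}(C)=\overline{C}^{w^*}$, which is property $(\EE')$. For (iii), $(\EE)\impli(\PP)$ is trivial because $(\EE)$ gives $S_1(C)=\overline{C}^{w^*}$, which is weak$^*$-closed and hence norm-closed; conversely, if $(\PP)$ holds then $S_1(C)$ is norm-closed, and by (i) it coincides with $\overline{C}^{w^*}$, which is $(\EE)$.
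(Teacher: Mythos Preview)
Your proof has a genuine gap at precisely the point you label ``the main obstacle''. You need a weak$^*$-convergent \emph{subsequence} of $(c_n^*)$ in~$X^*$, and you defer this to ``the diagonal techniques of \cite{pli3}'' yielding ``the requisite weak$^*$-angelicity for the sequence at hand''. This is not an argument but a hope: no justification is given, and weak$^*$-sequential density of~$X$ in~$X^{**}$ does not by itself guarantee that bounded sequences in~$X^*$ (even those whose restrictions to a given separable subspace converge) admit weak$^*$-convergent subsequences. The appeal to \cite{pli3} is particularly unfortunate, since the present paper explains that \cite{pli3} contains an error at exactly this kind of step.

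The gap can be closed, but with a different tool: replace ``subsequence'' by ``convex block subsequence''. The hypothesis forces $X$ to contain no copy of~$\ell_1$ (Fact~\ref{fact:l1}), and hence $B_{X^*}$ is convex block weak$^*$-compact (Bourgain). Therefore $(c_n^*)$ admits a weak$^*$-convergent convex block subsequence with limit~$y^*$; since $C$ is convex the blocks lie in~$C$, so $y^*\in S_1(C)$, and since $c_n^*|_Y\to\pi_Y(x^*)$ the blocks also converge to~$\pi_Y(x^*)$ on~$Y$, giving $\pi_Y(y^*)=\pi_Y(x^*)$ and $\langle x^{**},y^*\rangle=L$. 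This completes your Hahn--Banach argument. The paper's own proof is organised differently: it uses the same convex-block ingredient to show (Lemma~\ref{lem:boundary}) that $S_1(C)$ is a James boundary of~$\overline{C}^{w^*}$, and then invokes a boundary theorem (Proposition~\ref{pro:CMO}, resting on \cite{cas-mun-ori}) to conclude $\overline{C}^{w^*}=\overline{S_1(C)}^{\|\cdot\|}$. Once the no-$\ell_1$/convex-block input is supplied, your direct separation route is a legitimate and somewhat more elementary alternative; without it, the proof is incomplete.
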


In order to prove Theorem~\ref{theo:seq-dense-bidual} we need some previous work. The following fact
will play a key role in our argument (see \cite[Theorem~4]{mus6}, cf. \cite[Proposition~3.9]{rod-ver}):
 
\begin{fact}\label{fact:l1}
If $X$ is a Banach space which is weak$^*$-sequentially dense in~$X^{**}$, then
$X$ contains no isomorphic copy of~$\ell_1$.
\end{fact}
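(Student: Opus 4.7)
The plan is to argue the contrapositive: assuming $X$ contains an isomorphic copy of~$\ell_1$, I would exhibit an element of $X^{**}$ that fails to be the weak$^*$-limit of any sequence in~$X$. Fix an isomorphic embedding $T\colon \ell_1 \to X$; then $T^*\colon X^* \to \ell_\infty$ is a bounded surjection by Hahn--Banach, and $T^{**}\colon \ell_1^{**} \to X^{**}$ is an isomorphism onto its image $T(\ell_1)^{\perp\perp}$.

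For a free ultrafilter $\UU$ on~$\N$, let $\mu_\UU \in \ell_\infty^* \cong \ell_1^{**}$ be the associated $\{0,1\}$-valued finitely additive measure; equivalently, $\mu_\UU = w^*\text{-}\lim_{n\in\UU} e_n$ in~$\ell_1^{**}$. Set $\xi := T^{**}(\mu_\UU) \in X^{**}$. Suppose, toward a contradiction, that $(y_k)_{k\in\N} \subseteq X$ satisfies $y_k \xrightarrow{w^*} \xi$. For each $A \subseteq \N$, the open mapping theorem provides $x_A^* \in X^*$ with $T^*(x_A^*) = \chi_A$ and $\|x_A^*\| \leq C$ for some uniform $C>0$; the weak$^*$-convergence then yields
\[
\nu_k(A) := \langle x_A^*, y_k\rangle \xrightarrow{k\to\infty} \langle T^*(x_A^*), \mu_\UU\rangle = \chi_\UU(A).
\]

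The endgame is the Vitali--Hahn--Saks theorem: since $\UU$ is free, $\chi_\UU$ is finitely but not countably additive on~$\mathcal{P}(\N)$, so it cannot arise as the pointwise limit of countably additive measures on~$\N$. It therefore suffices to replace the set functions $\nu_k$ by genuine countably additive measures $\tilde\nu_k$ with the same pointwise behaviour in the limit.

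The main obstacle is precisely this replacement step: the section $A \mapsto x_A^*$ cannot in general be chosen linearly (because $\ker T^* = T(\ell_1)^\perp$ need not be complemented in $X^*$), so $\nu_k$ is not automatically additive. One handles this by exploiting that $\xi \in T(\ell_1)^{\perp\perp}$ forces $\langle x^*, y_k\rangle \to 0$ for every $x^* \in T(\ell_1)^\perp$, so the ambiguity in the choice of $x_A^*$ contributes vanishing noise in the limit. A careful subsequential extraction of Rosenthal/gliding-hump type, together with the biorthogonal functionals $(z_n^*)_n$ dual to $(T(e_n))_n$, then produces honest $\ell_1$-valued measures $\tilde\nu_k$ satisfying $\tilde\nu_k(A) \to \chi_\UU(A)$ for every $A \subseteq \N$, violating Vitali--Hahn--Saks. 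This is the strategy underlying~\cite{mus6} and its Pettis-integration reformulation in~\cite{rod-ver}.
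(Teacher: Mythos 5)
The paper does not actually prove this Fact; it is quoted with a citation to Musia{\l}~\cite[Theorem~4]{mus6} (cf.~\cite[Proposition~3.9]{rod-ver}), so there is no in-paper argument to compare against. Your proposal must therefore stand on its own, and it has a genuine gap exactly where you locate ``the main obstacle'': the passage from the set functions $\nu_k$ to countably additive measures $\tilde\nu_k$ is asserted, not carried out, and none of the ingredients you invoke suffices to carry it out. The Vitali--Hahn--Saks/Nikod\'ym endgame needs each $\tilde\nu_k$ to be a genuine (countably additive) measure on $\mathcal{P}(\N)$; but for a fixed $k$ the function $A\mapsto\langle x_A^*,y_k\rangle$ is not even finitely additive, because the selection $A\mapsto x_A^*$ is only defined modulo $\ker T^*=T(\ell_1)^\perp$. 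Your observation that $\langle x^*,y_k\rangle\to 0$ for $x^*\in T(\ell_1)^\perp$ controls the \emph{limit} in $k$ (which is why $\lim_k\nu_k(A)=\chi_{\UU}(A)$ is well defined), but it says nothing about additivity of $\nu_k$ for each fixed $k$, which is what the convergence theorem requires. The natural repair via biorthogonal functionals, $\tilde\nu_k(A):=\sum_{n\in A}\langle z_n^*,y_k\rangle$ with $T^*z_n^*=\chi_{\{n\}}$, fails at the first hurdle: the $y_k$ are arbitrary elements of $X$ with no relation to $T(\ell_1)$, so $\(\langle z_n^*,y_k\rangle\)_{n\in\N}$ need not be summable and $\tilde\nu_k$ need not be a (bounded, countably additive) measure; moreover, even when it is, there is no reason why $\tilde\nu_k(A)$ and $\nu_k(A)$ should have the same limit, since $x_A^*-\sum_{n\in A}z_n^*$ is not a convergent series in $X^*$ for infinite $A$. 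No subsequential extraction on $(y_k)$ is described that would produce such summability, and a ``gliding hump'' has nothing to bite on here.

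This missing step is not a technicality: it is essentially the entire content of the statement, which is one direction of the Odell--Rosenthal theorem in the non-separable setting (equivalently, part of Musia{\l}'s characterization of the weak Radon--Nikod\'ym property). The known proofs do not run through Vitali--Hahn--Saks applied to the approximating sequence; they use Rosenthal's independent-families machinery (an $\ell_1$-sequence produces a weak$^*$-compact set $L\sub B_{X^*}$ mapping continuously onto $\{0,1\}^{\N}$ on which the ultrafilter cluster point has no point of continuity, contradicting the Baire-1 character of weak$^*$-sequential limits), or the measure-theoretic route via Pettis integration as in~\cite{mus6,rod-ver}. Your first two paragraphs (the choice of $\xi=T^{**}(\mu_{\UU})$, the surjectivity of $T^*$, and the computation $\lim_k\langle x_A^*,y_k\rangle=\chi_{\UU}(A)$) are correct, but as written the proof is incomplete.
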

 
The next result goes back to \cite[Theorem~3.3.8]{efr}, cf. \cite[Proposition~7]{pli3}. We provide
another proof for the reader's convenience. Recall that, given a Banach space~$X$
and a weak$^*$-compact set $K \sub X^*$, a set $B \sub K$ is said to be a {\em James boundary} of~$K$ 
if for every $x \in X$ there is $x_0^* \in B$ such that $x_0^*(x)=\sup_{x^* \in K} x^*(x)$. 

\begin{pro}\label{pro:CMO}
	Let $X$ be a Banach space which is weak$^*$-sequentially dense in~$X^{**}$. Then  
	for every weak$^*$-compact set $K \sub X^*$ and every James boundary $B$ of~$K$ we have
	$$
		\overline{{\rm co}(K)}^{w^*}=\overline{{\rm co}(B)}^{\|\cdot\|}.
	$$
\end{pro}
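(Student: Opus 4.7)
My plan is a Hahn--Banach separation argument, combined with Rosenthal's $\ell_1$-theorem and Simons' equality for James boundaries. The hypothesis that $X$ is weak$^*$-sequentially dense in $X^{**}$ feeds two things at once: it supplies weak$^*$-approximating sequences for any $x^{**} \in X^{**}$, and by Fact~\ref{fact:l1} it ensures that $X$ contains no isomorphic copy of $\ell_1$, which is exactly what Rosenthal's theorem needs in order to upgrade a weak$^*$-convergent sequence in $X$ to a weakly Cauchy one.

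The inclusion $\overline{\co(B)}^{\|\cdot\|} \sub \overline{\co(K)}^{w^*}$ is trivial since $B \sub K$. For the reverse, I would argue by contradiction: suppose some $x^* \in \overline{\co(K)}^{w^*}$ is not in $\overline{\co(B)}^{\|\cdot\|}$. A Hahn--Banach separation in the Banach space $X^*$ produces $x^{**} \in X^{**}$ and $\alpha \in \R$ with
$$
	x^{**}(x^*) > \alpha > \sup_{b^* \in B} x^{**}(b^*) =: M.
$$
Use the weak$^*$-sequential density to pick a sequence $(x_n)_{n \in \N} \sub X$ with $x_n \to x^{**}$ in the weak$^*$-topology of $X^{**}$; the sequence is automatically norm-bounded. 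By Fact~\ref{fact:l1} and Rosenthal's $\ell_1$-theorem, I can pass to a weakly Cauchy subsequence (still denoted $(x_n)$), and then $\lim_n y^*(x_n) = x^{**}(y^*)$ for every $y^* \in X^*$, by uniqueness of the weak$^*$-limit in $X^{**}$.

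The final step is Simons' equality. Since $B$ is a James boundary of $K$, every evaluation $y^* \mapsto y^*(y)$ with $y \in X$ attains the same supremum on $B$, on $K$, and on $\overline{\co(K)}^{w^*}$; in particular the boundary condition still holds when $K$ is replaced by $\overline{\co(K)}^{w^*}$. Simons' lemma applied to the bounded sequence $(x_n)$, with ambient set $W := \overline{\co(K)}^{w^*}$ and boundary $W_0 := B$, then yields
$$
	\sup_{y^* \in \overline{\co(K)}^{w^*}} \limsup_n y^*(x_n) = \sup_{b^* \in B} \limsup_n b^*(x_n).
$$
Weak-Cauchyness collapses each $\limsup$ to $\lim$, which equals $x^{**}(y^*)$, so the right-hand side equals $M$, while the left-hand side is at least $x^{**}(x^*) > \alpha > M$, a contradiction.

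The main obstacle is verifying that Simons' equality applies in the form needed: it is usually stated with the boundary hypothesis for $K$ itself, and one must check it transfers to $\overline{\co(K)}^{w^*}$. This is routine because any convex combination $x = \sum_i \alpha_i x_i$ (with $\sum_i \alpha_i = 1$ and $\alpha_i \geq 0$) converges in norm in $X$ since $(x_n)$ is norm-bounded, and evaluation at $x$ is a $w^*$-continuous linear functional on $X^*$, so its supremum over $K$ automatically coincides with its supremum over $\overline{\co(K)}^{w^*}$.
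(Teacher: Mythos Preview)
Your argument is correct, but it follows a different path from the paper's.  The paper quotes a black-box result of Cascales--Mu\~noz--Orihuela to obtain $\overline{\co(K)}^{w^*}=\overline{\co(B)}^{\gamma}$, where $\gamma$ is the topology on~$X^*$ of uniform convergence on bounded countable subsets of~$X$; the hypothesis needed there is that $X$ contains no copy of~$\ell_1$, which comes from Fact~\ref{fact:l1}.  Then the weak$^*$-sequential density of~$X$ in~$X^{**}$ is used a second time, to show that $\gamma$ is stronger than the weak topology of~$X^*$, whence $\overline{\co(B)}^{\gamma}\sub\overline{\co(B)}^{\|\cdot\|}$.  Your route via Hahn--Banach separation plus Simons' sup--limsup theorem is more hands-on and avoids the $\gamma$-topology entirely; in effect you are reproving, in this special case, the part of the Cascales--Mu\~noz--Orihuela machinery that the paper imports.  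Your verification that $B$ remains a James boundary of $\overline{\co(K)}^{w^*}$ is the right point to check (and is immediate, since evaluation at any $x\in X$ is $w^*$-continuous and linear).

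One redundancy worth noting: once you have $(x_n)\sub X$ with $x_n\to x^{**}$ in $(X^{**},w^*)$, you already have $\lim_n y^*(x_n)=x^{**}(y^*)$ for every $y^*\in X^*$ by definition of weak$^*$-convergence, so $(x_n)$ is automatically weakly Cauchy.  The appeal to Rosenthal's $\ell_1$-theorem (and hence to Fact~\ref{fact:l1}) is therefore unnecessary in your argument, which makes it slightly more economical than you present it.
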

\begin{proof} The inclusion $\overline{{\rm co}(K)}^{w^*}\supseteq \overline{{\rm co}(B)}^{\|\cdot\|}$ is obvious.
Since $X$ contains no isomorphic copy of~$\ell_1$ (Fact~\ref{fact:l1}), we can apply \cite[Theorem~5.4]{cas-mun-ori} to get 
$$
	\overline{{\rm co}(K)}^{w^*}=\overline{{\rm co}(B)}^{\gamma},
$$ 
where $\gamma$ denotes the topology on~$X^*$ of uniform convergence on bounded countable subsets of~$X$.
Since $X$ is sequentially weak$^*$-dense in~$X^{**}$, it is easy to check that
$\gamma$ is stronger than the weak topology of~$X^*$, hence
$$
	\overline{{\rm co}(K)}^{w^*} = \overline{{\rm co}(B)}^{\gamma} \sub \overline{{\rm co}(B)}^{\text{weak}}=
	\overline{{\rm co}(B)}^{\|\cdot\|},
$$
which finishes the proof.
\end{proof}

The dual ball $B_{X^*}$ of a Banach space~$X$ is said to be {\em convex block weak$^*$-compact} if every 
sequence in~$B_{X^*}$ admits a weak$^*$-convergent convex block subsequence. 
By a {\em convex block subsequence} of a sequence $\(f_n\)_{n\in \N}$ in a linear space 
we mean a sequence $\(g_k\)_{k\in \N}$ of vectors of the form
$g_k=\sum_{n\in I_k}a_n f_n$,
where $\(I_k\)_{k\in\N}$ is a sequence of finite subsets of~$\N$ with $\max(I_k) < \min(I_{k+1})$ and $\(a_n\)_{n\in \N}$ is a sequence
of non-negative real numbers such that $\sum_{n\in I_k}a_n=1$ for all~$k \in \N$. 

\begin{lem}\label{lem:boundary}
	Let $X$ be a Banach space such that $B_{X^*}$ is convex block weak$^*$-compact.
	Let $D\sub X^*$ be a bounded set and $x\in X$. Then there is $x_0^*\in S_1({\rm co}(D))$ such that
	\begin{equation}\label{eqn:boundary}
		x_0^*(x)=\sup\big\{x^*(x): \, x^*\in \overline{D}^{w^*}\big\}.
	\end{equation}
\end{lem}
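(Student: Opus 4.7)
The plan is to witness the supremum by a sequence in $D$, apply convex block weak$^*$-compactness, and verify the functional value passes to the limit.

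First, I would set $s:=\sup\{x^*(x): x^*\in \overline{D}^{w^*}\}$, which is finite because $D$ is bounded. By weak$^*$-compactness there is $y^*\in \overline{D}^{w^*}$ with $y^*(x)=s$, and since every weak$^*$-neighborhood of $y^*$ meets~$D$, one can pick a sequence $(x_n^*)_{n\in\N}$ in~$D$ with $x_n^*(x)\to s$.

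Next, since $D$ is bounded, $(x_n^*)_{n\in\N}$ lies in some multiple of~$B_{X^*}$, which is convex block weak$^*$-compact by hypothesis (the property scales). Hence $(x_n^*)_{n\in\N}$ admits a convex block subsequence $(y_k^*)_{k\in\N}$ which is weak$^*$-convergent to some $x_0^*\in X^*$. Write $y_k^*=\sum_{n\in I_k}a_n^{(k)} x_n^*$ with $a_n^{(k)}\geq 0$, $\sum_{n\in I_k}a_n^{(k)}=1$, and $\max(I_k)<\min(I_{k+1})$. Each $y_k^*$ lies in $\mathrm{co}(D)$, so $x_0^*\in S_1(\mathrm{co}(D))$.

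Finally I would verify \eqref{eqn:boundary}. Since $\min(I_k)\to\infty$ and $x_n^*(x)\to s$, the convex combinations $y_k^*(x)=\sum_{n\in I_k}a_n^{(k)}x_n^*(x)$ also converge to~$s$; by weak$^*$-convergence $x_0^*(x)=\lim_k y_k^*(x)=s$, as required. No real obstacle is anticipated: the argument is essentially a standard extraction, and the only subtlety is recognizing that convex block weak$^*$-compactness of $B_{X^*}$ immediately upgrades to arbitrary bounded sequences by rescaling, and that the convex block structure (with $\min(I_k)\to\infty$) forces the value at~$x$ to inherit the limit of the original scalar sequence.
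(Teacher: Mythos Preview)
Your proof is correct and follows the same route as the paper's: pick a sequence in~$D$ whose values at~$x$ approach the supremum, pass to a weak$^*$-convergent convex block subsequence, and observe that the scalar convex block subsequence has the same limit. The only difference is cosmetic: you first locate $y^*\in\overline{D}^{w^*}$ attaining the supremum and then extract the sequence from its neighborhoods, whereas the paper simply uses that weak$^*$-continuity of evaluation at~$x$ gives $\sup\{x^*(x):x^*\in\overline{D}^{w^*}\}=\sup\{x^*(x):x^*\in D\}$ and picks $(x_n^*)$ in~$D$ directly.
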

\begin{proof}
	Let $\alpha$ be the right hand side of~\eqref{eqn:boundary}. Take a sequence $\(x_n^*\)_{n\in\N}$ in~$D$
	such that $\lim_{n\to \infty} x_n^*(x) = \alpha$. By the assumption on~$X$, there is a weak$^*$-convergent 
	convex block subsequence~$\(y^*_k\)_{k\in\N}$ of~$\(x_n^*\)_{n\in \N}$,
	with limit $x_0^*\in S_1({\rm co}(D))$. Since $\(y_k^*(x)\)_{k\in \N}$ is a convex block subsequence of~$\(x_n^*(x)\)_{n\in\N}$, we have
	$x_0^*(x)=\alpha$.  
\end{proof}

\begin{proof}[Proof of Theorem~\ref{theo:seq-dense-bidual}]
Since $X$ contains no isomorphic copy of~$\ell_1$ (Fact~\ref{fact:l1}), $B_{X^*}$ is convex block weak$^*$-compact,
see~\cite[Proposition~3.11]{bou:79} (cf. \cite[Proposition~11]{pfi-J} and~\cite{sch-3}). 
To prove~(i), let $C \sub X^*$ be a convex bounded set. By Lemma~\ref{lem:boundary}, the convex set $S_1(C)$ is a James boundary of
the convex weak$^*$-compact set $\overline{C}^{w^*}$. 
Now, an appeal to Proposition~\ref{pro:CMO} ensures that 
$\overline{C}^{w^*}=\overline{S_1(C)}^{\|\cdot\|}$. 
Since $\overline{S_1(C)}^{\|\cdot\|}$ is (obviously) contained in~$S_2(C) \sub \overline{C}^{w^*}$, we conclude that
$\overline{C}^{w^*}=\overline{S_1(C)}^{\|\cdot\|}=S_2(C)$. 
Statements (ii) and~(iii) follow at once from~(i).
\end{proof}

It is known that $JL_2(\FF)$ is sequentially weak$^*$-dense in its bidual for every almost disjoint family~$\FF$. 
Therefore, under CH, the property of being sequentially weak$^*$-dense in the bidual does not imply property~($\PP$).

\subsection*{Acknowledgements}
This research was supported by projects MTM2014-54182-P, MTM2017-86182-P (AEI/FEDER, UE) and 19275/PI/14 (Fundaci\'on S\'eneca).

\def\cprime{$'$}
\providecommand{\bysame}{\leavevmode\hbox to3em{\hrulefill}\thinspace}
\providecommand{\MR}{\relax\ifhmode\unskip\space\fi MR }
\providecommand{\MRhref}[2]{%
  \href{http://www.ams.org/mathscinet-getitem?mr=#1}{#2}
}
\providecommand{\href}[2]{#2}

\end{document}